\newcommand{\D}[1]{\mbox{\rm #1}} 
\newcommand{\dd}{\D{d}}
\newcommand{\dt}{{\rm d} t}
\newcommand{\dx}{{\rm d} x}
\newcommand{\rd}{{\rm d} }
\newcommand{\RR}{{\mathbb R}}
\newcommand{\EE}{{\mathbb E}}
\newcommand{\mm}{{\mathfrak{m}}}
\newcommand{\NN}{{\mathbb N}}
\newcommand{\OX}{{\overline{X}}}
\newcommand{\mc}[1]{\mathcal{#1}}
\newcommand{\la}{\langle}
\newcommand{\ra}{\rangle}
\DeclareMathOperator*{\argmin}{argmin}
\numberwithin{equation}{section}
\newtheorem{theorem}{Theorem}[section]
\newtheorem{lemma}[theorem]{Lemma}
\newtheorem{assum}[theorem]{Assumption}
\newtheorem{remark}[theorem]{Remark}
\definecolor{ForestGreen}{RGB}{34,139,34}
\definecolor{ao(english)}{rgb}{0.0, 0.5, 0.0}
\begin{document}

\title[Uniform-in-time mean-field limit]{Uniform-in-time mean-field limit estimate for the Consensus-Based Optimization}
\thanks{
}


\author{Hui Huang}
\author{Hicham Kouhkouh}

\address{Hui Huang \textbf{ and } Hicham Kouhkouh\newline \indent
University of Graz\newline \indent
Department of Mathematics and Scientific Computing, NAWI Graz\newline \indent
Graz, Austria
}
\email{\texttt{hui.huang@uni-graz.at}, \quad \texttt{hicham.kouhkouh@uni-graz.at}}
\thanks{}







\date{\today}

\begin{abstract}
We establish a uniform-in-time estimate for the mean-field convergence of the Consensus-Based Optimization (CBO) algorithm by rescaling the consensus point in the dynamics with a small parameter $\kappa \in (0,1)$. This uniform-in-time estimate is essential, as CBO convergence relies on a sufficiently large time horizon and is crucial for ensuring stable, reliable long-term convergence, the latter being key to the practical effectiveness of CBO methods.
\end{abstract}

\subjclass[MSC]{65C35, 90C26, 90C56}

\keywords{Consensus-Based Optimization; mean-field limit; global optimization}








\maketitle

\section{Introduction}

In recent years, optimization methods have become central to numerous fields, from machine learning and data science to engineering and economics. Among these, Consensus-Based Optimization (CBO) \cite{pinnau2017consensus,carrillo2018analytical} has emerged as a promising approach for addressing challenging nonconvex and nonsmooth optimization problems. Inspired by the collective behavior observed in social and biological systems, CBO utilizes the interactions of multiple agents, or particles,  to search for optimal solutions in a coordinated and effective manner.  Driven by a wide range of applications, researchers have extended and adapted the original CBO model to address various settings. These extensions include global optimization on compact manifolds \cite{fornasier2021JMLR, ha2022stochastic}, handling general constraints \cite{borghi2023constrained, beddrich2024constrained}, cost functions with multiple minimizers \cite{bungert2022polarized}, multi-objective problems \cite{borghi2022consensus}, and sampling from distributions \cite{carrillo2022consensus}. CBO methods have also been applied in multi-player games \cite{chenchene2023consensus}, min-max problems \cite{huang2024consensus,borghi2024particle}, multi-level optimization  \cite{herty2024multiscale}, and clustered federated learning \cite{carrillo2023fedcbo}.

Let us consider  the following optimization problem:
\begin{equation*}
	\text{Find }\; x^* \in \argmin_{x\in \RR^d} f(x),\,
\end{equation*}
where $f(\cdot)$ can be a non-convex non-smooth objective function that one wishes to minimize. Then the CBO dynamic considers the following system of $N$ interacting particles, denoted as $\{X_{\cdot}^i\}_{i=1}^N$, which satisfies
\begin{equation}\label{CBOparticle}
	\rd X_t^i=-\lambda(X_t^i - \mathfrak{m}_\alpha(\rho_t^{N}))\dt+\sigma D(X_t^i-\mathfrak{m}_\alpha(\rho_t^{N}))\rd B_t^{i},\quad i=1,\dots, N=:[N]\,,
\end{equation}
where  $\sigma>0$ is a real constant, $\rho_t^N:=\frac{1}{N}\sum_{i=1}^N\delta_{X_t^i}$ is the empirical measure associated to the particle system, and $\{B_.^i\}_{i=1}^N$ are $N$ independent $d$-dimensional Brownian motions.
 Here, we employ the anisotropic diffusion in the sense that $D(X):=\mbox{diag}(|X_1|,\dots,|X_d|)$ for any $X\in\RR^d$, which has been proven to handle high-dimensional problems more effectively \cite{carrillo2021consensus,fornasier2022anisotropic}.
The \textit{current global consensus} point $\mathfrak{m}_{\alpha}(\rho_t^N)$ is defined by
\begin{equation}\label{XaN}
	\mathfrak{m}_{\alpha}(\rho_t^N): = \frac{\int_{\RR^d} x \, \omega_{\alpha}^{f}(x)\; \rho_t^N(\dx)}{\int_{\RR^d}\omega_{\alpha}^{f}(x)\; \rho_t^N(\dx)}\,,
\end{equation}
and the weight function is chosen to be $\omega_\alpha^f(x):=\exp(-\alpha f(x))$.
This choice of weight function is motivated by the well-known Laplace's principle \cite{miller2006applied,MR2571413}. Moreover, the particle system is initialized with i.i.d. data $\{X_{0}^i\}_{i=1}^N$, where each $X_0^i$ is distributed according to a given measure $\rho_0 \in \mathscr{P}(\RR^d)$ for all $i \in [N]$.

The convergence proof of the CBO method is typically conducted in the context of mean-field theory \cite{fornasier2024consensus,carrillo2018analytical}. Specifically, rather than analyzing the $N$-particle system \eqref{CBOparticle} directly, one considers the limit as $N$ approaches infinity and examines the corresponding McKean-Vlasov process $\OX_{\cdot}$, which is governed by the following equation:
\begin{equation}\label{CBO}
\begin{aligned}
    \rd \OX_t & =-\lambda(\OX_t-\mathfrak{m}_{\alpha}(\rho_t)) \, \dt + \sigma D(\OX_t-\mathfrak{m}_{\alpha}(\rho_t)) \, \rd B_t\,, \quad \text{with } \; \rho_t:= \text{Law}(\OX_{t})
\end{aligned}
\end{equation}
Then, under certain assumptions on the objective function $f(\cdot)$, and the well-prepared initial data and parameters, it can be proved  that, for any fixed $\alpha>0$, $\rho_t$ converges to a Dirac measure. If moreover $\alpha$ is chosen large enough, it can also be shown that the latter Dirac measure can be supported on a point close to $x^*$, a global minimizer of $f(\cdot)$. One may also obtain the global convergence at the particle level \eqref{CBOparticle} directly, as shown in \cite{ha2020convergence,ha2021convergence}. However, this approach requires common noise in the particle system; that is, it assumes \( B^i_. = B_. \) for all \( i \in [N] \) in \eqref{CBOparticle}.

\subsection{Related results}

The convergence of the particle system \eqref{CBOparticle} to the McKean-Vlasov process \eqref{CBO} is referred to as the mean-field limit. Establishing a rigorous proof of this convergence is challenging, as the consensus point defined in \eqref{XaN} is only locally Lipschitz. Several mean-field results for CBO have been established \cite{fornasier2020consensus,huang2022mean, fornasier2024consensus, gerber2023mean}. In \cite{fornasier2020consensus}, a mean-field limit estimate for a variant of CBO constrained to hypersurfaces is achieved through the coupling method. Later, \cite{huang2022mean} proves the mean-field limit for standard CBO using a compactness argument based on Prokhorov's theorem; however, this approach does not provide a convergence rate in terms of the number of particles $N$. The results in \cite{fornasier2024consensus}  demonstrate a probabilistic mean-field approximation of the form
\begin{equation}\label{meaneq1}
\begin{aligned}
	& \quad \quad \sup_{i\in[N]} \, \sup_{t\in[0,T]}\EE\left[|X_t^i-\OX_t^i|^2 \,\textbf{I}_{\Omega/\Omega_M}\right]\leq C\frac{1}{N}, \quad \text{where }\\
     &  \Omega_M:=\left\{\sup_{t\in[0,T]}\frac{1}{N}\sum_{i\in[N]}(|X_t^i|^4+|\OX_t^i|^4)\geq M\right\} \quad \text{ and } \quad \mathbb{P}(\Omega_M)=\mc{O}(M^{-1}).
\end{aligned}
\end{equation} 
Most recently, \cite{gerber2023mean} removed the high-probability assumption by establishing an improved stability estimate on $\mathfrak{m}_{\alpha}$ (see \eqref{lemeq1}), resulting in
\begin{equation}\label{meaneq2}
\sup_{i\in[N]}\EE\left[\sup_{t\in[0,T]}|X_t^i-\OX_t^i|^2\right]\leq C\frac{1}{N}\,.
\end{equation}
This result is subsequently extended to the multi-species case in \cite{huang2025well}. However, none of the existing results in the literature provide a uniform-in-time estimate for the mean-field limit of the CBO; specifically, the constant $C$ in \eqref{meaneq1} and \eqref{meaneq2} depends on $T$ exponentially. Such a uniform-in-time estimate is crucial, as the convergence of CBO fundamentally relies on $t$ being sufficiently large. It is also highly sought after within the CBO community because it directly addresses the stability and reliability of long-time convergence, a key factor in ensuring the practical effectiveness of CBO methods.

After the completion of our present work, two new interesting results appeared in \cite{bayraktar2025uniform} and then in \cite{gerber2025uniform}: \\
\textbullet\; The authors in \cite{bayraktar2025uniform} 
succeeded to prove a uniform--in--time mean--field limit for a CBO model where the diffusion coefficient is truncated. This modification allows the dynamics to remain confined within a compact set, and enhances its stability properties. Their main result \cite[Theorem 2.6]{bayraktar2025uniform} is a \textit{weak propagation of chaos} of the form 
\begin{equation*}
    \sup\limits_{t\geq 0} \left| \mathbb{E}\left[\mathbf{\Phi}(\nu^{N}_t)\right] \,-\, \mathbf{\Phi}(\bar{\nu}_{t}) \right| \leq \frac{C}{N}
\end{equation*}
where $\nu^{N}_{t}$ is the empirical measure of the interacting particles, and $\bar{\nu}_{t}$ is the mean--field law. Here  $C>0$ is a constant independent of time, and $\mathbf{\Phi}$ is any smooth enough function chosen within a given class of functions from $\mathscr{P}(\mathbb{R}^{d})$ to $\mathbb{R}$. The techniques used in \cite{bayraktar2025uniform} are different from ours, and rely on new results about linearization of nonlinear Fokker--Planck equations. \\
\textbullet\; On the other hand, in  \cite[Theorem 2.1]{gerber2025uniform}, the authors were able to prove a uniform--in--time mean--field limit in the sense of \eqref{meaneq2}. Their analysis takes advantage from new stability estimates for the consensus term, together with concentration estimates for the interacting particle system. These were obtained in particular using synchronous coupling. An advantage of their main result, is that it concerns the original model ($\kappa=1$ in \eqref{CBO kappa}) of CBO without further modifications. 

Our strategy and main result are discussed next.

\subsection{A key observation}

We believe that the primary difficulty in obtaining a uniform--in--time estimate stems from the non-uniqueness of the invariant measure of \eqref{CBO}. Indeed, for the process \eqref{CBO}, it can be easily verified that  any Dirac measure (not necessarily supported on the global minimizer) is invariant; see Remark \ref{rem: dirac inv} below. This non--uniqueness is mainly due to two reasons: 
in the drift, the term $-X$ is not strong enough with respect to the mean--field term $\mathfrak{m}_{\alpha}(\cdot)$, and the diffusion degenerates. With these features, it becomes difficult to study the dynamical properties of \eqref{CBO}, in particular its long-time behavior.

To remedy to this situation, we consider a modification of \eqref{CBO} which addresses exactly these two mentioned issues, and which has already been proposed in \cite{huang2024self, herty2024multiscale}. Hence, we consider a \textit{rescaled} CBO given by
\begin{equation}\label{CBO kappa}
    \rd \OX_t =-\lambda(\OX_t - \kappa\,\mathfrak{m}_{\alpha}(\rho_t))\dt+\sigma\left(\delta\,\mathds{I}_{d} + D(\OX_t - \kappa\, \mathfrak{m}_{\alpha}(\rho_t)) \right)\rd B_t
\end{equation}
complemented with an initial condition $\OX_{0}\sim \rho_0\in\mathscr{P}_{16}(\RR^d)$, and where $\mathds{I}_{d}$ is the $d$-dimensional identity matrix, $0<\kappa<1$ is a small positive constant, and $\delta\geq 0$ is a constant that is allowed to be null\footnote{We keep $\delta$ in the present manuscript to capture the models in \cite{huang2024self, herty2024multiscale} where it is assumed to be positive. Yet, $\delta$ does not need to be non-null for our results to hold in the sequel. In either cases, $\delta$ is a constant  independent of the data of the problem (in particular of $T,N$), and is kept fixed throughout the manuscript.}. 
Clearly, the  underlying system of $N$ interacting particles is of the form
 \begin{equation}\label{CBOkappa particle}
\rd X_t^i =-\lambda \left(X_t^i - \kappa\,\mathfrak{m}_{\alpha}(\rho_t^N) \right) \dt+\sigma\left(\delta\,\mathds{I}_{d} + D(X_t^i - \kappa\, \mathfrak{m}_{\alpha}(\rho_t^N) \right)\rd B_t^i,\quad i\in [N]\,.
\end{equation}
The advantage of the  rescaled CBO \eqref{CBO kappa} is that now one can prove that it has a unique invariant measure $\rho_\alpha^*$ \cite[Proposition 3.4]{huang2024self}. With the invariant measure $\rho_\alpha^*$ of \eqref{CBO kappa} in hand, we can write a formal global convergence result. Indeed, let 
$\rho_\alpha^*=\text{Law}[\OX_\infty]$, then the following holds
\begin{equation}
	\label{eq:formal conv}
	0=\frac{\rd \EE[\OX_\infty]}{\dt}=-\lambda (\EE[\OX_\infty]-\kappa\,\mathfrak{m}_{\alpha}(\rho_\alpha^*)) \; \Rightarrow \;  \EE[\OX_\infty] 
	= \kappa \,\mathfrak{m}_{\alpha}(\rho_\alpha^*)=\kappa\int_{ \RR^d }x\,\eta_\alpha^*(\dx)\,,
\end{equation}
where $\eta_\alpha^*(\dx):=\frac{\omega_\alpha^f(x)\rho_\alpha^*(\dx)}{\int_{ \RR^d }\omega_\alpha^f(x)\rho_\alpha^*(\dx)}$.
If additionally we assume that for any $\varepsilon>0$, there exists some constant $C_\varepsilon>0$ independent of $\alpha$ such that 
\begin{equation*}
    \rho_\alpha^*(\mc A_\varepsilon)\geq C_\varepsilon\, \quad \text{ with } \quad \mc A_\varepsilon:=\left\{x\in\RR^d:~e^{-f(x)}>e^{-f(x^*)}-\varepsilon \right\}
\end{equation*}
where $x^{*}$ is a global minimizer, then according to  \cite[Lemma A.3]{huang2024consensus} it holds that
\begin{equation}\label{lap_princ}
	\lim\limits_{\alpha\to\infty}\left(-\frac{1}{\alpha}\log\left(\int_{ \RR^d }\omega_\alpha^f(x)\rho_\alpha^*(\dx)\right)\right)= f(x^*)\,.
\end{equation}
In order to better see how  $\eta_\alpha^*(\dx):=\frac{\omega_\alpha^f(x)\rho_\alpha^*(\dx)}{\int_{ \RR^d }\omega_\alpha^f(x)\rho_\alpha^*(\dx)}$ approximates the Dirac distribution $\delta_{x^*}$ for large $\alpha\gg 1$, we proceed with the following computations. Recall Laplace's principle in \eqref{lap_princ}, which is equivalent to 
\begin{equation*}
    \lim\limits_{\alpha\to\infty}\left(\int_{ \RR^d }\omega_\alpha^f(x)\rho_\alpha^*(\dx)\right)^{\frac{1}{\alpha}}= e^{-f(x^*)}\,,
\end{equation*}
which also means
\begin{equation*}
\begin{aligned}
    \lim\limits_{\alpha\to\infty} \frac{e^{-f(x^*)}}{\left(\int_{ \RR^d }\omega_\alpha^f(x)\rho_\alpha^*(\dx)\right)^{\frac{1}{\alpha}}} = 1,
\end{aligned}
\end{equation*}
or equivalently (by rising to power $\alpha$)
\begin{equation*}
\begin{aligned}
    \lim\limits_{\alpha\to\infty} \frac{e^{-\alpha f(x^*)}}{\int_{ \RR^d }\omega_\alpha^f(x)\rho_\alpha^*(\dx)} = 1.
\end{aligned}
\end{equation*}
It suffices now to note that the left-hand side is the integration of the indicator function $\textbf{I}_{\{x^*\}}(\cdot)$ supported on the global minimizer $x^{*}$ with respect to $\eta_\alpha^*(\dx)$. Indeed, 
we have
\begin{equation*}
    \frac{e^{-\alpha f(x^*)}}{\int_{ \RR^d }\omega_\alpha^f(x)\rho_\alpha^*(\dx)}  = \la \eta_\alpha^*(\dx),\textbf{I}_{\{x^*\}}\ra
\end{equation*}
where $\langle\cdot\,,\,\cdot\rangle$ denotes the integration of a function with respect to a measure. Therefore, when $\alpha$ is large enough, one expects $\la \eta_\alpha^*(\dx),\textbf{I}_{\{x^*\}}\ra \approx 1$. 
Thus $\eta_\alpha^*$ approximates the Dirac distribution $\delta_{x^*}$ for large $\alpha\gg 1$.  Consequently, $\mathfrak{m}_{\alpha}(\rho_\alpha^*)$ provides a good estimate of $x^*$, which (recalling \eqref{eq:formal conv}) leads to the fact that $\EE[\OX_\infty]\approx \kappa\, x^*$ for sufficiently large $\alpha\gg 1$. The rigorous proof is obtained in \cite{huang2025faithful}.

\begin{remark}\label{rem: dirac inv}
    We can verify our claim of \eqref{CBO} having all Dirac measures invariant. 
    To do so, let us introduce the following matrix-valued and vector-valued functions defined for $(x,\mu) \in \mathbb{R}^d \times \mathscr{P}(\mathbb{R}^d)$ by
    \begin{align*}
    	&A:=(a^{ij})_{i,j}, \quad \text{ such that } \quad a^{ij}(x,\mu):=\frac{\sigma^2}{2}(|(x-\mathfrak{m}_{\alpha}(\mu))_i|)^2\delta_{ij} \\
        \mbox{ and } \quad & \; b :=(b^i)_{i},  \quad \text{ such that } \quad b^i(x,\mu):=- \lambda(x-\mathfrak{m}_{\alpha}(\mu))_i
    \end{align*}
    $\delta_{ij}$ being the Kronecker symbol. 
    The diffusion operator whose coefficients are $A$ and $b$ is given by
    \begin{equation*}
        L_{A,b,\mu}\psi(x) =  \text{trace}\big(A(x,\mu)D^2\psi(x)\big) + 
        b(x,\mu) \cdot \nabla\psi(x) \quad \forall \; \psi \in \mathscr{C}_c^\infty (\mathbb{R}^d),
    \end{equation*}
    and $L_{A,b,\mu}^{*}$ is its adjoint operator. 
    
    A measure $\mu$ is invariant if it solves $\; L_{A,b,\mu}^{*}\mu=0 \; $ in the distributional sense that is
    \begin{equation}\label{dist}
        \int_{\mathbb{R}^d} L_{A,b,\mu}\psi(x)\; \dd \mu(x) =0 \quad \forall \; \psi \in \mathscr{C}_c^\infty (\mathbb{R}^d).
    \end{equation}
    
    We can now verify \eqref{dist} when evaluated in a Dirac measure supported in an arbitrarily chosen point, say $\bm{z}\in \mathbb{R}^{d}$. We have $\bm{\delta_{z}}(x) = 0$ whenever $x\neq \bm{z}$ and $\bm{\delta_{z}}(\bm{z}) = 1$. In particular, we have
    \begin{align*}
        \mathfrak{m}_{\alpha}(\bm{\delta_{z}}) = \frac{
        \int_{\mathbb{R}^d} x \, \omega_{\alpha}^{f}(x)\; \bm{\delta_{z}}(\dd x)}{\int_{\mathbb{R}^d}\omega_{\alpha}^{f}(x)\; \bm{\delta_{z}}(\dd x)} = \bm{z}.
    \end{align*}
    The coefficients when evaluated in $\mu = \bm{\delta_{z}}$ become
    \begin{equation}\label{evaluation}
    \begin{aligned}
    	& a^{ij}(x,\bm{\delta_{z}}):=\frac{\sigma^2}{2}|(x-\mathfrak{m}_{\alpha}(\bm{\delta_{z}}))_i|^2\delta_{ij}  \; = \; \frac{\sigma^2}{2}|(x- \bm{z}))_i|^2\delta_{ij}  \\
        \mbox{ and }\quad  & b^i(x,\bm{\delta_{z}}):=- \lambda(x-\mathfrak{m}_{\alpha}(\bm{\delta_{z}}))_i \; = \; - \lambda(x-\bm{z})_i
    \end{aligned}
    \end{equation}
    Finally, $\forall \; \psi \in \mathscr{C}_c^\infty (\mathbb{R}^d)$, we have
    \begin{align*}
        \int_{\mathbb{R}^d} L_{A,b,\bm{\delta_{z}}}\psi(x)\; \bm{\delta_{z}}(\dd x)  & =  \int_{\mathbb{R}^d} \bigg[\text{trace}\big(A(x,\bm{\delta_{z}})D^2\psi(x)\big) + 
        b(x,\bm{\delta_{z}}) \cdot \nabla\psi(x)\bigg]\; \bm{\delta_{z}}(\dd x) \\
        & =   \text{trace}\big(A(\bm{z},\bm{\delta_{z}})D^2\psi(\bm{z})\big) + 
        b(\bm{z},\bm{\delta_{z}}) \cdot \nabla\psi(\bm{z}). 
    \end{align*}
    But, using \eqref{evaluation}, it holds $A(\bm{z},\bm{\delta_{z}}) = 0$ and $b(\bm{z},\bm{\delta_{z}}) = 0$. Thus we have 
    \begin{equation*}
        \int_{\mathbb{R}^d} L_{A,b,\bm{\delta_{z}}}\psi(x)\; \bm{\delta_{z}}(\dd x) = 0 \quad \forall \; \psi \in \mathscr{C}_c^\infty (\mathbb{R}^d), \; \forall \; \bm{z} \in \mathbb{R}^{d}
    \end{equation*}
    which proves the claim. Note that with $\kappa <1$, the same computations (in particular \eqref{evaluation}) show that Dirac measures can never be invariant for such dynamics, except for a Dirac measure supported in $0$. If moreover we have $\delta >0$ in \eqref{CBO kappa}, then also Dirac measure supported in $0$ is no more invariant. 
\end{remark}

Numerically, in the Figure \ref{fig:ackley}, we apply our rescaled CBO particle system to the Ackley function 
\[
A(x) := -20 \exp\left(-0.2 |x-3|\right) - \exp\left( \cos(2 \pi (x-3))\right) + e + 20\,,
\]
which is a popular benchmark nonconvex function in optimization with multiple local minimizers and a unique global minimizer $x^*=3$.
\begin{figure}[h]
	\centering
	\includegraphics[width=0.65\textwidth]{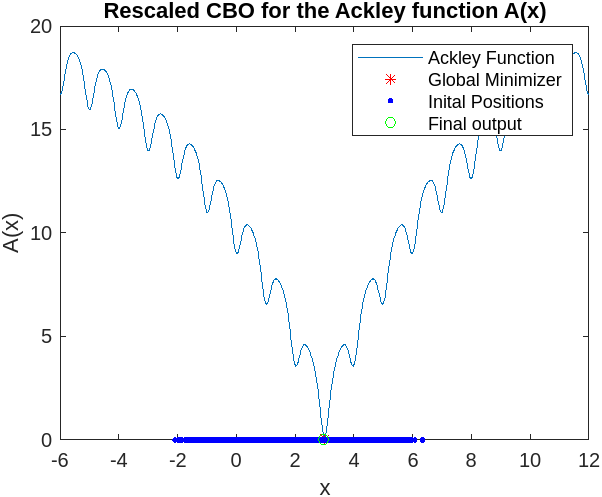}
	\caption{We apply the rescaled CBO particle system \eqref{CBOkappa particle} to the Ackley function $A(x)$, which has a unique global minimizer $x^*=3$ (the red star). The initial particles (the blue dots) are sampled from a normal distribution $\mathcal{N}(2,1)$. The simulation parameters are $N=10^5,\lambda=1,\sigma=2,\alpha=10^{15},\delta=0, \rd t=0.01, T=100$ and $\kappa=0.01$. The final output is $\frac{1}{\kappa}\frac{1}{N}\sum_{i\in[N]}X_T^{i}\approx \frac{1}{\kappa}\EE[\OX_T]$ (the green circle).}
	\label{fig:ackley}
\end{figure}

\subsection{Main contribution} We shall prove a uniform-in-time estimate for the mean-field convergence of the rescaled CBO particle system \eqref{CBOkappa particle} (Theorem \ref{thmmean}). This is based on a uniform-in-time bound on the moments of the dynamics (Theorem \ref{thm: finite moments}), together with recent results on the probability of large excursions \cite{gerber2023mean}, and an earlier result on the mean-field convergence \cite{doukhan2009evaluation}. A key observation is that rescaling the consensus point strengthens the confining properties of the dynamics. Finally, we conclude with Algorithm \ref{alg: cbo k}, highlighting how the proposed modification of the dynamics can still fit within the existing algorithms of CBO.

\section{Uniform-in-time mean-field limit estimate}

\begin{assum}\label{assum1}
	We assume the following properties for the objective function.
	\begin{enumerate}
		\item $f:~\RR^d\to \RR$ is bounded from below by $\underline f=\min f$, and there exist $L_{f}>0, s\geq 0$ constants such that
		\begin{equation*}
			|f(x)-f(y)|\leq L_f(1+|x|+|y|)^s|x-y|, \quad \forall x,y\in \RR^d\,.
		\end{equation*} 
		\item There exist constants $c_\ell,C_\ell, c_u, C_u>0$  and $\ell>0$ such that
        \begin{equation*}
        \begin{aligned}
			f(x) -\underline f & \leq c_u|x|^\ell+C_u,\quad \forall\, x\in \RR^d,\\
            f(x) -\underline f	& \geq c_\ell |x|^\ell-C_\ell,\quad \forall\, x\in \RR^d.
        \end{aligned}
		\end{equation*}
	\end{enumerate}
\end{assum}

In what follows, $\|\cdot\|$ denotes the Frobenius norm of a matrix and $|\cdot|$ is the standard Euclidean norm in $\RR^d$; $\mathscr{P}(\RR^d)$ denotes the space of probability measures on $\RR^d$, and $\mathscr{P}_p(\RR^d)$ with $p\geq 1$ contains all $\mu\in \mathscr{P}(\RR^d)$ such that $\mu(|\cdot|^p):=\int_{\RR^d}|x|^p\mu(\dx)<\infty$; it is equipped with $p$-Wasserstein distance $W_p(\cdot,\cdot)$. 
Lastly, we define $\mathscr{P}_{2,R}(\mathbb{R}^{d}) := \{\mu \in \mathscr{P}_{2}(\mathbb{R}^{d})\,:\, \mu(|\cdot|^{2})\leq R\}$.

First, let us recall some estimates on $\mathfrak{m}_{\alpha}(\mu)$ from \cite[Corollary 3.3, Proposition A.3]{gerber2023mean}.
\begin{lemma}\label{lem: useful estimates}
Suppose that $f(\cdot)$ satisfies Assumption \ref{assum1}. Then for all $R>0$ and for all $p\geq 1$, 
there exists some constant $L_{\mathfrak{m}}>0$ depending only on $L_f,s,R,p,\alpha$ such that
\begin{equation}\label{lemeq1}
    |\mathfrak{m}_{\alpha}(\mu)-\mathfrak{m}_{\alpha}(\nu)|\leq L_{\mathfrak{m}} W_p(\mu,\nu)\quad \forall (\mu,\nu)\in \mathscr{P}_{p,R}(\RR^d)\times \mathscr{P}_{p}(\RR^d)\,.
\end{equation}
Moreover for all $q\geq p\geq 1$, there exists constant $C_1>3$ depending only on $p,q,c_\ell,c_u$, $C_\ell,C_u,\ell,\alpha$ such  that
\begin{equation}\label{lemeq2}
    |\mathfrak{m}_{\alpha}(\nu)|^p\leq C_1\left(\int_{\RR^d}|x|^q\nu(dx)\right)^{\frac{p}{q}}\quad \forall \nu\in \mathscr{P}_q(\RR^d)\,.
\end{equation}
\end{lemma}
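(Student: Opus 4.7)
The plan is to prove both estimates by direct manipulation of the consensus functional $\mathfrak{m}_\alpha$, combining the Lipschitz and growth properties of $f$ from Assumption~\ref{assum1} with Jensen's inequality, optimal couplings, and Markov-type localization. Since the Lipschitz estimate \eqref{lemeq1} will rely on a uniform moment bound for $|\mathfrak{m}_\alpha(\nu)|$, I would establish the moment estimate \eqref{lemeq2} first and then feed it into the proof of \eqref{lemeq1}.

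For \eqref{lemeq2}, the key observation is that $\mathfrak{m}_\alpha(\nu) = \int x\, d\eta_\alpha^\nu$ is the mean of the tilted probability measure $\eta_\alpha^\nu(dx) := \omega_\alpha^f(x)\, \nu(dx) / \int \omega_\alpha^f\, d\nu$. Applying Jensen's inequality twice, first to $|\cdot|^p$ and then to $t \mapsto t^{q/p}$ on $[0,\infty)$ (convex since $q \geq p$), gives
\begin{equation*}
|\mathfrak{m}_\alpha(\nu)|^p \,\leq\, \int |x|^p\, d\eta_\alpha^\nu \,\leq\, \left(\int |x|^q\, d\eta_\alpha^\nu\right)^{p/q}\,,
\end{equation*}
reducing the task to showing $\int |x|^q\, d\eta_\alpha^\nu \leq C\, M$, with $M := \int |x|^q\, d\nu$ and $C$ depending only on the parameters. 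This is the step I expect to be the main obstacle: the denominator $\int \omega_\alpha^f\, d\nu$ admits no $\nu$-uniform positive lower bound, so numerator and denominator of $\int |x|^q\, d\eta_\alpha^\nu$ must not be estimated separately. I would circumvent this by a splitting based on the level sets of $f$. Writing $\int |x|^q d\eta_\alpha^\nu$ as the sum of its contributions over $\{f \leq K\}$ and $\{f > K\}$, the lower growth bound in Assumption~\ref{assum1}(ii) confines $\{f \leq K\}$ to a Euclidean ball of radius $\sim K^{1/\ell}$, so the inner piece is controlled by $K^{q/\ell}$; on $\{f > K\}$, the pointwise estimate $\omega_\alpha^f \leq e^{-\alpha K}$ allows a cancellation with a matching lower bound on $\int \omega_\alpha^f\, d\nu$. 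The threshold $K$ is calibrated via the upper growth bound together with Markov's inequality so that $\nu(\{f \leq K\}) \geq 1/2$: picking a radius $r$ with $r^q = 2M$ (which forces $\nu(B(0,r)) \geq 1/2$) and setting $K := c_u r^\ell + C_u + \underline{f} \sim M^{\ell/q}$ achieves both $\nu(\{f \leq K\}) \geq 1/2$ and hence $\int \omega_\alpha^f\, d\nu \geq \tfrac12\, e^{-\alpha K}$. Balancing the two contributions then produces $\int |x|^q\, d\eta_\alpha^\nu \leq C M$ as required (with the small-$M$ regime handled analogously by a direct Jensen argument using Markov on a ball of fixed radius).

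For \eqref{lemeq1}, I would rewrite the difference with a common denominator,
\begin{equation*}
\mathfrak{m}_\alpha(\mu) - \mathfrak{m}_\alpha(\nu)
    = \frac{\int x\, \omega_\alpha^f\, d(\mu - \nu)}{\int \omega_\alpha^f\, d\mu}
    + \mathfrak{m}_\alpha(\nu)\, \frac{\int \omega_\alpha^f\, d(\nu - \mu)}{\int \omega_\alpha^f\, d\mu}\,,
\end{equation*}
and estimate each numerator against an optimal $W_p$-coupling $\pi \in \Pi(\mu, \nu)$, so that it becomes $\int [g(x) - g(y)]\, \pi(dx, dy)$ with $g(x) = x\, \omega_\alpha^f(x)$ or $g(x) = \omega_\alpha^f(x)$. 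Assumption~\ref{assum1}(i) together with the Lipschitz property of $t \mapsto e^{-t}$ on $[\alpha \underline{f}, \infty)$ yields $|\omega_\alpha^f(x) - \omega_\alpha^f(y)| \leq \alpha L_f\, e^{-\alpha \underline{f}}\, (1 + |x| + |y|)^s |x - y|$; Hölder's inequality, combined with the moment bound $\mu \in \mathscr{P}_{p,R}$ and the bound on $|\mathfrak{m}_\alpha(\nu)|$ supplied by \eqref{lemeq2}, then converts each numerator into a multiple of $W_p(\mu, \nu)$. The residual difficulty here is to lower-bound $\int \omega_\alpha^f\, d\mu$ uniformly over $\mu \in \mathscr{P}_{p,R}$; I would handle this by the same localization idea as above, since Markov's inequality gives $\mu(B(0, r)) \geq 1/2$ whenever $r^p \geq 2R$, and Assumption~\ref{assum1}(ii) then supplies $\omega_\alpha^f \geq \exp(-\alpha(c_u r^\ell + C_u + \underline{f}))$ on $B(0, r)$, yielding a strictly positive constant depending only on $R$ and the parameters of the problem.
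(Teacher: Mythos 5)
The paper offers no proof of this lemma; it is quoted verbatim from \cite[Corollary 3.3, Proposition A.3]{gerber2023mean}, so there is no "paper's own proof" to compare against. Judged on its own terms, your plan is broadly in the right spirit but has a genuine gap in the argument for \eqref{lemeq1}.

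Your treatment of \eqref{lemeq2} is sound. The two applications of Jensen's inequality and the level-set splitting on $\{f\leq K\}$ versus $\{f>K\}$, with $K$ calibrated via Markov so that $\nu(\{f\leq K\})\geq 1/2$ and hence $\int\omega_\alpha^f\,d\nu\geq\tfrac12 e^{-\alpha K}$, do the job. One caveat to actually write out: in the regime $M:=\int|x|^q\,d\nu$ small, the radius $r$ with $r^q=2M$ makes $R_K$ tend to a \emph{fixed positive} constant, so the bound $\int|x|^qd\eta_\alpha^\nu\leq R_K^q+2M$ does not go to zero with $M$, and you really do need the alternative estimate you gesture at (the crude bound $\omega_\alpha^f\leq e^{-\alpha\underline f}$ together with Markov at a fixed radius) to obtain $\int|x|^q\,d\eta_\alpha^\nu\leq C\,M$ uniformly; splicing the two regimes at, say, $M=1/2$ gives the claim with a $C_1$ depending only on the stated parameters.

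The gap lies in \eqref{lemeq1}. Your decomposition into two fractions with common denominator $\int\omega_\alpha^f\,d\mu$ is correct, and the lower bound on that denominator via $\mu\in\mathscr{P}_{p,R}$, Markov, and Assumption~\ref{assum1}(ii) is exactly right. But after introducing the optimal coupling $\pi$ and the crude Lipschitz bound $|\omega_\alpha^f(x)-\omega_\alpha^f(y)|\leq\alpha L_f\,e^{-\alpha\underline f}(1+|x|+|y|)^s|x-y|$, your H\"older step with exponents $(p,p')$ produces a factor $\big(\int(1+|x|+|y|)^{sp'}\,d\pi\big)^{1/p'}$, and the $y$-marginal of $\pi$ is $\nu$, which lies only in $\mathscr{P}_p$ with \emph{no uniform moment bound}. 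Two things go wrong: (i) the required order $sp'$ can exceed $p$ (this happens whenever $p<s+1$), so these integrals need not even be finite; (ii) even when $sp'\leq p$, the resulting constant depends on $\nu(|\cdot|^p)$, which is not among the quantities $L_{\mathfrak m}$ is allowed to depend on. The same issue reappears in your second term, where \eqref{lemeq2} with $q=p$ gives $|\mathfrak m_\alpha(\nu)|\leq C_1^{1/p}\big(\nu(|\cdot|^p)\big)^{1/p}$ — again an uncontrolled $\nu$-dependent quantity. To close the gap one must use that $\omega_\alpha^f$ decays like $e^{-\alpha c_\ell|x|^\ell}$ (Assumption~\ref{assum1}(ii)), which turns $x\mapsto x\,\omega_\alpha^f(x)$ and $x\mapsto\omega_\alpha^f(x)$ into genuinely \emph{globally} Lipschitz and bounded functions so their $\mu$-vs-$\nu$ differences are controlled by $W_1\leq W_p$ without any residual $\nu$-moment factor; one also needs to handle the second term by combining the resulting bound with the elementary inequality $(\nu(|\cdot|^p))^{1/p}\leq R^{1/p}+W_p(\mu,\nu)$ and splitting into the regimes $W_p\leq 1$ and $W_p>1$ so that the final constant is indeed independent of $\nu$. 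As written, the H\"older step does not deliver the claimed $L_{\mathfrak m}$.
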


\begin{lemma}\label{thmexist}  
	Let $f(\cdot)$ satisfy Assumption \ref{assum1} and $\rho_0\in\mathscr{P}_{p}(\RR^d)$ for any $p\geq 2$, then both the dynamics \eqref{CBOkappa particle} and \eqref{CBO kappa} have unique strong solutions and they satisfy that
	\begin{equation}\label{mbound}
		\sup_{i\in[N]}\EE\left[\sup_{t\in[0,T]}|X_t^i|^{p}\right],\quad \EE\left[\sup_{t\in[0,T]} |\OX_t|^{p}\right]<\infty
	\end{equation}
	for any $0<T<\infty$. 
\end{lemma}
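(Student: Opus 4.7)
The plan is to handle the finite particle system \eqref{CBOkappa particle} first by classical SDE theory coupled to moment estimates, and then to construct and uniquely identify the McKean--Vlasov solution of \eqref{CBO kappa} via a Picard fixed point on the space of measure flows with controlled $p$-th moments. Lemma \ref{lem: useful estimates} is the workhorse of both steps: \eqref{lemeq2} feeds the moment estimates, while \eqref{lemeq1} drives the contraction.

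\textbf{Particle system.} Viewing \eqref{CBOkappa particle} as an SDE on $\RR^{Nd}$ with state $\mathbf{X}=(X^1,\dots,X^N)$, the coefficients are locally Lipschitz: the drift is linear up to $\mathfrak{m}_\alpha(\rho^N_t)$, which is locally Lipschitz in $\mathbf{X}$ by \eqref{lemeq1}, and $D(\cdot)$ is globally $1$-Lipschitz. Standard SDE theory thus yields a unique strong solution up to an explosion time $\tau_\infty$. To rule out blow-up and establish the moment bound, I would apply It\^o's formula to $|X_t^i|^p$ (with the usual regularization $(|x|^2+\varepsilon)^{p/2}$ if $p$ is not even), stop at $\tau_R:=\inf\{t:\max_i|X_t^i|\geq R\}$, sum in $i$ and divide by $N$. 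Controlling the diffusion contribution via $\|D(x)\|\leq |x|$, the cross term $X_t^i\cdot\kappa\mathfrak{m}_\alpha(\rho_t^N)$ by Young's inequality, and using \eqref{lemeq2} to bound $|\mathfrak{m}_\alpha(\rho_t^N)|^p$ by $\frac1N\sum_j|X_t^j|^p$, one obtains after expectation a linear Gronwall inequality for $\EE[\tfrac1N\sum_i|X_{t\wedge\tau_R}^i|^p]$ with constants independent of $R$ and $N$. Letting $R\to\infty$ gives $\tau_\infty=\infty$ almost surely and a uniform-in-$i$ bound on $\EE[|X^i_t|^p]$; a further It\^o $+$ Burkholder--Davis--Gundy step applied to each $|X^i_t|^p$, reusing this bound to control $\EE[\int_0^T|\mathfrak{m}_\alpha(\rho_s^N)|^p\ds]$, moves the supremum inside the expectation and produces the first estimate in \eqref{mbound}.

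\textbf{McKean--Vlasov dynamics.} For \eqref{CBO kappa} I would run a Picard fixed point on
\[
\mathcal{E}_{T,M}:=\bigl\{\mu\in C([0,T];\mathscr{P}_p(\RR^d))\,:\,\mu_0=\rho_0,\ \sup_{t\in[0,T]}\mu_t(|\cdot|^p)\leq M\bigr\},
\]
equipped with $d_T(\mu,\nu):=\sup_{t\in[0,T]}W_p(\mu_t,\nu_t)$, which is a complete metric space. Given $\mu\in\mathcal{E}_{T,M}$, define $\Phi(\mu)_t:=\text{Law}(Y_t^\mu)$, where $Y^\mu$ solves the linear SDE obtained from \eqref{CBO kappa} by freezing the law in $\mathfrak{m}_\alpha$ to $\mu_t$; its coefficients are globally Lipschitz in the state (with $|\mathfrak{m}_\alpha(\mu_t)|$ controlled through \eqref{lemeq2}), so $Y^\mu$ is a well-defined strong solution. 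Moment estimates that mirror the particle step show that for $M=M(\rho_0(|\cdot|^p),T)$ large enough one has $\Phi(\mathcal{E}_{T,M})\subseteq \mathcal{E}_{T,M}$. For contraction, I couple $Y^\mu$ and $Y^\nu$ on the same Brownian motion with the same initial datum, apply It\^o to $|Y_t^\mu-Y_t^\nu|^p$, bound the diffusion difference by $\|D(u)-D(v)\|\leq|u-v|$, and invoke \eqref{lemeq1} in the form $|\mathfrak{m}_\alpha(\mu_t)-\mathfrak{m}_\alpha(\nu_t)|\leq L_\mathfrak{m} W_p(\mu_t,\nu_t)$. Gronwall then yields $d_{T_0}(\Phi(\mu),\Phi(\nu))\leq \tfrac12\, d_{T_0}(\mu,\nu)$ on a small interval $[0,T_0]$; iterating on successive subintervals $[kT_0,(k+1)T_0]$, which is legitimate because the moment bound persists in time, produces a unique fixed point on $[0,T]$, hence a unique strong solution to \eqref{CBO kappa} satisfying the second estimate in \eqref{mbound}.

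\textbf{Main obstacle.} The principal difficulty is that \eqref{lemeq1} is only a \emph{local} Lipschitz estimate, with $L_\mathfrak{m}$ depending on a moment bound $R$. As a consequence, both steps must be carried out on subclasses of measures with controlled $p$-th moments, so the moment bounds have to be derived \emph{self-consistently} with the fixed-point procedure rather than as a separate a posteriori estimate. A secondary, routine technicality is the regularization of $x\mapsto|x|^p$ near the origin when $p\geq 2$ is not even, needed to justify the It\^o expansions.
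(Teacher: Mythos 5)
Your particle-level argument matches what the paper relies on (though the paper gives no worked proof: it cites \cite[Theorem~2.1]{carrillo2018analytical} and \cite[Theorem~2.2]{gerber2023mean} for well-posedness of \eqref{CBOkappa particle}, and \cite[Lemma~3.4]{carrillo2018analytical} or \cite[Lemma~3.5]{gerber2023mean} together with Burkholder--Davis--Gundy for \eqref{mbound}). For the McKean--Vlasov equation \eqref{CBO kappa} you take a genuinely different route: the paper points to a Leray--Schauder compactness argument (\cite[Theorem~3.1]{carrillo2018analytical}, \cite[Theorem~2.2]{gerber2023mean}), with uniqueness supplied separately; you instead run a Picard contraction on the moment ball $\mathcal{E}_{T,M}$ and get existence and uniqueness in one sweep. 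Both are standard for McKean--Vlasov dynamics with locally Lipschitz interaction; the Leray--Schauder route is more forgiving because it only asks for an a priori moment estimate and weak compactness, while your contraction route is more self-contained but forces the moment control to be woven into the fixed-point scheme, which is exactly where the subtlety lies.

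That subtlety is the one overstated step in your write-up: ``for $M=M(\rho_0(|\cdot|^p),T)$ large enough one has $\Phi(\mathcal{E}_{T,M})\subseteq \mathcal{E}_{T,M}$.'' The frozen SDE for $Y^\mu$ has linearly growing coefficients with an additive source of size $\kappa\,|\mathfrak{m}_\alpha(\mu_t)|$, and \eqref{lemeq2} gives $|\mathfrak{m}_\alpha(\mu_t)|^p\leq C_1\,\mu_t(|\cdot|^p)\leq C_1 M$ on $\mathcal{E}_{T,M}$. Gronwall then yields a bound of the schematic form $\EE[|Y^\mu_t|^p]\leq e^{Ct}\rho_0(|\cdot|^p)+(e^{Ct}-1)\big(C_1 M+C'/C\big)$, which is an affine function of $M$ with slope $C_1(e^{Ct}-1)$; this is $\leq M$ for some $M$ only when $C_1(e^{CT}-1)<1$, i.e.\ only for $T$ below a threshold $T^{*}=C^{-1}\log(1+1/C_1)$ that does \emph{not} become larger by taking $M$ larger. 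For $T\geq T^{*}$ no $M$ achieves invariance, so the fixed point has to be built iteratively on subintervals of length $<T^{*}$ with a geometrically growing sequence $M_k$ (which, since $L_{\mathfrak m}$ in \eqref{lemeq1} depends on the moment radius, also shrinks the contraction step on later subintervals). Everything stays finite for finite $T$, so your construction does go through once rephrased this way -- your ``main obstacle'' remark already gestures at the self-consistency issue -- but the invariance claim as stated would not hold up; alternatively, derive the a priori bound on $\rho_t(|\cdot|^p)$ directly from \eqref{lemeq2} and Gronwall before running Picard, and use it to pin down the moment ball on each subinterval.
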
 
\begin{proof}
The proof of well-posedness for the particle system \eqref{CBOkappa particle} follows the same reasoning as in \cite[Theorem 2.1]{carrillo2018analytical} or \cite[Theorem 2.2]{gerber2023mean}, relying on the fact that the coefficients in \eqref{CBOkappa particle} are locally Lipschitz. For the mean-field dynamic \eqref{CBO kappa}, well-posedness can be established using the Leray-Schauder fixed-point theorem, as demonstrated in \cite[Theorem 3.1]{carrillo2018analytical} or \cite[Theorem 2.2]{gerber2023mean}. Finally, the moment estimates in \eqref{mbound} are derived from \cite[Lemma 3.4]{carrillo2018analytical} or \cite[Lemma 3.5]{gerber2023mean} by using the Burkholder-Davis-Gundy inequality.
\end{proof}

Now let $\{(\OX_t^i)_{t\geq 0}\}_{i=1}^N$ be $N$ independent copies of solutions to the mean-field dynamics \eqref{CBO kappa}, so they are i.i.d. with the same  distribution $\rho_t$ at any time $t\geq 0$. First, one can prove the following uniform-in-time moment estimates for empirical measures.

\begin{theorem}\label{thm: finite moments}
	Let $f(\cdot)$ satisfy assumption \ref{assum1} and it satisfies $\rho_0\in\mathscr{P}_{2p}(\RR^d)$ with any $p\geq 2$. Suppose that $\lambda> C(p,d,\sigma)$ for some $C(p,d,\sigma)>0$ depending only on $p,d,\sigma$.
	\begin{enumerate}
		\item Consider particle system \eqref{CBOkappa particle} with $\rho_0^{\otimes N}$-distributed initial data, and let $\rho_t^N$ be the corresponding empirical measure at time $t\geq 0$. Then for sufficiently small $\kappa$ there exists some constant $C_2>0$ depending only $p,\lambda,\sigma,C_1,d,\alpha$ and $\rho_0(|\cdot|^p)$ such that it holds
		\begin{equation}\label{eq(1)}
			\sup_{t\geq 0}\left\{\sup_{i\in[N]}\EE[|X_t^i|^p]+\EE[\rho_t^N(|\cdot|^p)]+\EE[|\mm_\alpha(\rho_t^N)|^p]\right\}<C_2\,.
		\end{equation}
		\item Consider particles  $\{(\OX_t^i)_{t\geq 0}\}_{i=1}^N$  with $\rho_0^{\otimes N}$-distributed initial data satisfying \eqref{CBO kappa}, and let $\bar \rho_t^N$ be the corresponding empirical measure. Then  for sufficiently small $\kappa$ there exists some constant $C_2>0$ depending only $p,\lambda,\sigma,C_1,d,\alpha$ and $\rho_0(|\cdot|^p)$  such that it holds 
		\begin{equation}\label{eq(2)}
			\sup_{t\geq 0}\left\{\sup_{i\in[N]}\EE[|\OX_t^i|^p]+\EE[\bar\rho_t^N(|\cdot|^p)]+\EE[|\mm_\alpha(\bar \rho_t^N)|^p]\right\}<C_2\,.
		\end{equation}
	\end{enumerate}
\end{theorem}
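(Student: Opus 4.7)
The plan is to derive a linear differential inequality of the form $\frac{\rd}{\dt}\EE[|X_t^i|^p]\leq -c_1\EE[|X_t^i|^p]+c_2$ with $c_1>0$ uniform in time, from which \eqref{eq(1)} follows by Gronwall. The key structural feature that makes this loop close is the rescaling by $\kappa$: the confining drift $-\lambda X_t^i\,\dt$ is not scaled, whereas the consensus term $\lambda\kappa\,\mathfrak{m}_\alpha(\rho_t^N)\,\dt$ is, so for $\kappa$ small the origin-restoring force dominates even after the dependence on $\EE[|\mathfrak{m}_\alpha(\rho_t^N)|^p]$ is absorbed through Lemma \ref{lem: useful estimates}. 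Without this rescaling ($\kappa=1$), the drift contribution to the coefficient of $\EE[|X_t^1|^p]$ can be overwhelmed by the large constant $C_1$ appearing in \eqref{lemeq2} when $\alpha\gg 1$, which is precisely why existing proofs pay an exponential-in-$T$ price. The mean-field statement \eqref{eq(2)} follows by an entirely analogous argument with $\rho_t^N$ replaced by $\rho_t=\mathrm{Law}(\OX_t^i)$.

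Concretely, applying Itô's formula to $V(x)=|x|^p$ along \eqref{CBOkappa particle} and writing $Y_t^i:=X_t^i-\kappa\mathfrak{m}_\alpha(\rho_t^N)$, the drift produces $-\lambda p|X_t^i|^p+\lambda p\kappa|X_t^i|^{p-2}X_t^i\cdot\mathfrak{m}_\alpha(\rho_t^N)$; moreover, since the diffusion matrix $\sigma(\delta\mathds{I}_d+D(Y_t^i))$ is diagonal, using $(a+b)^2\leq 2a^2+2b^2$ together with $|Y_t^i|^2\leq 2|X_t^i|^2+2\kappa^2|\mathfrak{m}_\alpha(\rho_t^N)|^2$ a direct calculation bounds the Itô correction pointwise by
\begin{equation*}
2\sigma^2 p(p-1)|X_t^i|^p+\sigma^2 p(d+p-2)\delta^2|X_t^i|^{p-2}+2\sigma^2 p(p-1)\kappa^2|X_t^i|^{p-2}|\mathfrak{m}_\alpha(\rho_t^N)|^2.
\end{equation*}
Applying Young's inequality to the two cross terms ($|X|^{p-1}|\mathfrak{m}_\alpha|\leq\tfrac{p-1}{p}|X|^p+\tfrac{1}{p}|\mathfrak{m}_\alpha|^p$ and $|X|^{p-2}|\mathfrak{m}_\alpha|^2\leq\tfrac{p-2}{p}|X|^p+\tfrac{2}{p}|\mathfrak{m}_\alpha|^p$), absorbing the $\delta^2|X|^{p-2}$ term via $|X|^{p-2}\leq\varepsilon|X|^p+C_\varepsilon$, then taking expectations (with a localization argument at $\tau_n:=\inf\{t:|X_t^i|\geq n\}$, where the assumption $\rho_0\in\mathscr{P}_{2p}$ enters to ensure Fatou passage of the stochastic-integral term), and invoking $\EE[|\mathfrak{m}_\alpha(\rho_t^N)|^p]\leq C_1\,\EE[\rho_t^N(|\cdot|^p)]=C_1\,\EE[|X_t^1|^p]$ from \eqref{lemeq2} combined with exchangeability of $\{X_t^i\}_i$, one arrives at
\begin{equation*}
\tfrac{\rd}{\dt}\EE[|X_t^1|^p]\leq\bigl(-\lambda p+2\sigma^2 p(p-1)+O(\kappa)+O(\kappa C_1)\bigr)\EE[|X_t^1|^p]+c_2,
\end{equation*}
with $c_2$ a fixed constant depending on $\lambda,\sigma,p,d,\alpha,\delta$. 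For $\lambda>2\sigma^2(p-1)=:C(p,d,\sigma)$ and $\kappa$ small enough, the bracket is bounded above by some $-c_1<0$, so Gronwall yields $\sup_{t\geq 0}\EE[|X_t^1|^p]\leq\rho_0(|\cdot|^p)+c_2/c_1$; the remaining bounds on $\EE[\rho_t^N(|\cdot|^p)]$ and $\EE[|\mathfrak{m}_\alpha(\rho_t^N)|^p]$ follow at once from exchangeability and a further application of \eqref{lemeq2}.

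The main obstacle is the careful bookkeeping needed to guarantee that the aggregated coefficient of $\EE[|X_t^1|^p]$ stays strictly negative. The delicate point is that $C_1$ from Lemma \ref{lem: useful estimates} grows with $\alpha$, so $\kappa$ must be calibrated small enough that the cross terms $\lambda\kappa C_1$ and $\sigma^2\kappa^2 C_1$ are dominated by the gap $\lambda p-2\sigma^2 p(p-1)$; this is exactly what ``for sufficiently small $\kappa$'' in the theorem encodes. Apart from this, the proof of \eqref{eq(2)} is identical: one applies Itô to $|\OX_t^i|^p$ with the now-deterministic consensus $\mathfrak{m}_\alpha(\rho_t)$ and uses $|\mathfrak{m}_\alpha(\rho_t)|^p\leq C_1\rho_t(|\cdot|^p)=C_1\EE[|\OX_t^1|^p]$ to obtain the same Gronwall inequality with the same threshold on $\lambda$ and $\kappa$.
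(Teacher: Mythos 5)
Your proposal is correct and follows essentially the same strategy as the paper's own proof: apply It\^o's formula to $|X^{i}_{t}|^{p}$, exploit the fact that the confining drift $-\lambda X^{i}_{t}$ carries no $\kappa$ while the consensus term and all cross terms do, bound the consensus point via \eqref{lemeq2} together with exchangeability of $\{X_t^i\}_i$, absorb the $\delta$-contributions, and close a dissipative linear ODE $\frac{\rd}{\dt}\EE[|X_t^1|^p]\leq -\gamma\,\EE[|X_t^1|^p]+C$ by Gr\"onwall.

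The only visible divergences are cosmetic: the paper applies \eqref{lemeq2} with $q=2$ to get $|\mathfrak{m}_\alpha(\rho_t^N)|^2\leq \frac{C_1}{N}\sum_j|X_t^j|^2$, then uses H\"older on $\EE[|X_t^i|^{p-2}|X_t^j|^2]$; you instead apply Young to split $|X|^{p-2}|\mathfrak{m}_\alpha|^2$ into $|X|^p$ and $|\mathfrak{m}_\alpha|^p$, then invoke \eqref{lemeq2} with $q=p$. Both close the loop at the $p$th moment via exchangeability, and both lead to the same smallness requirement $\kappa\lesssim C_1^{-1}$. Likewise your Fatou/stopping-time argument for killing the martingale term plays exactly the role of the paper's appeal to the finite-$T$ moment bound in Lemma \ref{thmexist}; either justification is valid, and both are where the assumption $\rho_0\in\mathscr{P}_{2p}$ is used.
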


\begin{proof}
Arguments for $(1)$ and $(2)$ are parallel, so we only prove $(1)$. We shall use the notation $U\otimes V = (u_{m}v_{n})_{1\leq m,n\leq d}$ for any $U,V \in \mathbb{R}^{d}$.  Let $h(x):=\frac{1}{p}|x|^{p}$, for $x\in \mathbb{R}^{d}$. Then for any fixed $i\in[N]$, applying It\^{o}'s formula leads to
\begin{align*}
    \rd h(X^{i}_{t}) & = \bigg\{ -\lambda \nabla h(X^{i}_{t})\cdot \big(X^{i}_{t} - \kappa \, \mathfrak{m}_{\alpha}(\rho^{N}_{t})\big) \\
    & \quad \quad \quad + \frac{\sigma^{2}}{2} \text{trace}\left[D^{2}h(X^{i}_{t}) \bigg( \delta\,\mathds{I}_{d} + D\big(X^{i}_{t} - \kappa\,\mathfrak{m}_{\alpha}(\rho^{N}_{t})\big) \bigg)^{2} \right] \bigg\}\,\rd t \\
    & \quad \quad \quad \quad \quad \quad + \sigma\,\nabla h(X^{i}_{t})^{\top}\bigg( \delta\,\mathds{I}_{d} + D\big(X^{i}_{t} - \kappa\,\mathfrak{m}_{\alpha}(\rho^{N}_{t})\big) \bigg)\, \rd B^{i}_{t}\\
    & =: F(t,X_{t}^{i})\,\rd t + G(t,X_{t}^{i})\,\rd B^{i}_{t}.
\end{align*}

Let us introduce $\phi:[0,+\infty)\to \mathbb{R}$ such that $\phi'(t) = F(t, X^{i}_{t})$. In integral form, we have
\begin{equation*}
    \phi(t) - \phi(0)  = \int_{0}^{t}F(s,X^{i}_{s})\,\rd s = \frac{1}{p}\int_{0}^{t}\rd |X^{i}_{s}|^{p} - \int_{0}^{t} G(s,X^{i}_{s})\,\rd B^{i}_{s}.
\end{equation*}
One could observe that, thanks to Lemma \ref{thmexist} we have
\begin{equation*}
    \mathbb{E}\left[\int_{0}^{t} G(s,X^{i}_{s})\,\rd B^{i}_{s}\right] = 0,\quad \forall\, t>0.
\end{equation*}
Indeed, this is true since we have
\begin{align*}
    & \mathbb{E}\left[\int_{0}^{t} |G(s,X^{i}_{s})|^{2}\rd s\right] \leq C \, \EE\left[\int_0^t|X_s^i|^{2p}\rd s\right] \; \leq T\,C\,\EE\left[\sup_{t\in[0, T]}|X_t^i|^{2p}\right] \\
    & \quad \quad \quad \quad \leq C(T,\lambda,\sigma,\, \EE[|X_0^i|^{2p}])\; <+\infty,\, \text{ thanks to Lemma \ref{thmexist} and $\rho_0\in\mathscr{P}_{2p}(\RR^d)$.}
\end{align*}
In particular, the latter result holds for a fixed and finite time horizon $T<\infty$. 
Therefore we have 
\begin{equation*}
    \mathbb{E}[\phi(t) - \phi(0)] = \frac{1}{p}\mathbb{E}\left[\int_{0}^{t}\rd |X^{i}_{s}|^{p}\right] = \mathbb{E}\left[\frac{1}{p}|X^{i}_{t}|^{p}\right] - \mathbb{E}\left[\frac{1}{p}|X^{i}_{0}|^{p}\right],
\end{equation*}
hence $\frac{\rd}{\rd t}\mathbb{E}\left[\frac{1}{p}|X^{i}_{t}|^{p}\right] = \frac{\rd}{\rd t} \mathbb{E}[\phi(t)]$. 
Provided we could exchange the expectation and the derivation, we would have
\begin{equation}\label{differentiation}
    \frac{\rd}{\rd t}\mathbb{E}\left[\frac{1}{p}|X^{i}_{t}|^{p}\right] = \mathbb{E}[\phi'(t)] = \mathbb{E}[F(t, X^{i}_{t})], \quad \forall \, t>0.
\end{equation}
Let $t$ be fixed in $(0,+\infty)$. We have
\begin{equation*}
    \frac{\rd}{\rd t}\mathbb{E}[\phi(t)] = \lim\limits_{h\to 0} \frac{1}{h}\left(\mathbb{E}[\phi(t+h)] - \mathbb{E}[\phi(t)]\right) = \lim\limits_{h\to 0} \mathbb{E}[\phi'(\tau_{h})]
\end{equation*} 
where $\tau_{h}\in (t,t+h)$ is given by the mean-value theorem. We can now apply the dominated convergence theorem, noting that $|\phi'(\cdot)| = |F(\,\cdot\,,X^{i}_{\cdot})|$ is continuous, and is uniformly bounded in a (sufficiently large) neighborhood of $t$. This proves the claim \eqref{differentiation}. 
We can now further estimate the right-hand side as follows. Recall
\begin{align*}
    F(t,X^{i}_{t}) 
     &= -\lambda \nabla h(X^{i}_{t})\cdot \big(X^{i}_{t} - \kappa \, \mathfrak{m}_{\alpha}(\rho^{N}_{t})\big) \\
     &\quad+ \frac{\sigma^{2}}{2} \text{trace}\left[D^{2}h(X^{i}_{t}) \bigg( \delta\,\mathds{I}_{d} + D\big(X^{i}_{t} - \kappa\,\mathfrak{m}_{\alpha}(\rho^{N}_{t})\big) \bigg)^{2} \right]
\end{align*}
and with $h(x)=\frac{1}{p}|x|^{p}$ we have
\begin{align*}
    \nabla h(x) = |x|^{p-2}x, \quad \text{ and } \quad D^{2}h(x) = |x|^{p-2}\mathds{I}_{d} + (p-2)|x|^{p-4}x\otimes x.
\end{align*}
We deal first with the trace term in $F(t,X^{i}_{t})$. We have
\begin{align*}
    & 
    \text{trace}\left[D^{2}h(X^{i}_{t}) \bigg( \delta\,\mathds{I}_{d} + D\big(X^{i}_{t} - \kappa\,\mathfrak{m}_{\alpha}(\rho^{N}_{t})\big) \bigg)^{2} \right]\\
    =&\; \text{trace}\left[D^{2}h(X^{i}_{t}) \bigg( \delta^{2}\,\mathds{I}_{d} + D\big(X^{i}_{t} - \kappa\,\mathfrak{m}_{\alpha}(\rho^{N}_{t})\big)^{2} + 2\delta\, D\big(X^{i}_{t} - \kappa\,\mathfrak{m}_{\alpha}(\rho^{N}_{t})\big)\bigg) \right]\\
    =&\; \delta^{2}\,\text{trace}(D^{2}h(X^{i}_{t})) + \text{trace}\big[ D^{2}h(X^{i}_{t})D(X^{i}_{t} - \kappa\,\mathfrak{m}_{\alpha}(\rho^{N}_{t}))^{2}\big]\\
    &\;  + 2\delta\,\text{trace}\big[D^{2}h(X^{i}_{t})D(X^{i}_{t} - \kappa\,\mathfrak{m}_{\alpha}(\rho^{N}_{t}))\big]\\
    =&\;  (d+p-2)\delta^{2}\,|X^{i}_{t}|^{p-2} \\
    & \; +  |X^{i}_{t}|^{p-2} \sum\limits_{k=1}^{d} |(X^{i}_{t} - \kappa\,\mathfrak{m}_{\alpha}(\rho^{N}_{t}))_{k}|^{2} + (p-2) |X^{i}_{t}|^{p-4} \sum\limits_{k=1}^{d}|(X^{i}_{t})_{k}|^{2} |(X^{i}_{t} - \kappa\,\mathfrak{m}_{\alpha}(\rho^{N}_{t}) )_{k}|^{2}\\
    &\; + 2\delta\, |X^{i}_{t}|^{p-2} \sum\limits_{k=1}^{d}|(X^{i}_{t} - \kappa\, \mathfrak{m}_{\alpha}(\rho^{N}_{t}))_{k}| + 2\delta\,(p-2)|X^{i}_{t}|^{p-4} \sum\limits_{k=1}^{d} |(X^{i}_{t})_{k}|^{2} |(X^{i}_{t} - \kappa\,\mathfrak{m}_{\alpha}(\rho^{N}_{t}))_{k}|.
\end{align*}
We use $ |(X^{i}_{t})_{k}|^{2} \leq |X^{i}_{t}|^{2}$. Also, in the last line, we use $|(X^{i}_{t} - \kappa\, \mathfrak{m}_{\alpha}(\rho^{N}_{t}))_{k}| \leq \frac{1}{2}(|(X^{i}_{t} - \kappa\, \mathfrak{m}_{\alpha}(\rho^{N}_{t}))_{k}|^{2} + 1)$, then the sum yields $\sum\limits_{k=1}^{d}|(X^{i}_{t} - \kappa\, \mathfrak{m}_{\alpha}(\rho^{N}_{t}))_{k}| \leq \frac{1}{2}(|X^{i}_{t} - \kappa\, \mathfrak{m}_{\alpha}(\rho^{N}_{t})|^{2} + d)$.  We obtain
\begin{align*}
    & 
    \text{trace}\left[D^{2}h(X^{i}_{t}) \bigg( \delta\,\mathds{I}_{d} + D\big(X^{i}_{t} - \kappa\,\mathfrak{m}_{\alpha}(\rho^{N}_{t})\big) \bigg)^{2} \right]\\
    \leq& \left\{ (d+p-2)\delta^{2} + d(p-1)\delta + (p-1)\left(1+\delta\right)|X^{i}_{t} - \kappa\,\mathfrak{m}_{\alpha}(\rho^{N}_{t})|^{2}\right\} |X^{i}_{t}|^{p-2}\\
    \leq& \left\{ (d+p-2)\delta^{2} + d(p-1)\delta + 2(p-1)\left(1+\delta\right)(|X^{i}_{t}|^{2} + \kappa^{2}\,|\mathfrak{m}_{\alpha}(\rho^{N}_{t})|^{2})\right\} |X^{i}_{t}|^{p-2}\\
    \leq&\left\{ (d+p-2)\delta^{2} + d(p-1)\delta\right\}|X^{i}_{t}|^{p-2} 
    + 2(p-1)\left(1+\delta\right) |X^{i}_{t}|^{p} +
     \\
    & +2(p-1)\left(1+\delta\right)\kappa^{2}\,|\mathfrak{m}_{\alpha}(\rho^{N}_{t})|^{2}\, |X^{i}_{t}|^{p-2}
\end{align*}
Therefore we have
\begin{align*}
    F(t,X^{i}_{t}) 
    & \leq -\lambda\left(1 -\frac{\kappa}{2}\right) |X^{i}_{t}|^{p} + \lambda\,\frac{\kappa}{2}\,|X^{i}_{t}|^{p-2}\,|\mathfrak{m}_{\alpha}(\rho^{N}_{t})|^{2} \\
    & \quad \quad +\frac{\sigma^{2}}{2}\left\{ (d+p-2)\delta^{2} + d(p-1)\delta\right\}|X^{i}_{t}|^{p-2} 
    + \sigma^{2}(p-1)\left(1+\delta\right) |X^{i}_{t}|^{p} 
     \\
    & \quad \quad \quad \quad +\sigma^{2}(p-1) \left( 1+\delta \right) \kappa^{2} \, |\mathfrak{m}_{\alpha} (\rho^{N}_{t})|^{2}\, |X^{i}_{t}|^{p-2}\\
    & \leq \left\{ -\lambda\left(1 -\frac{\kappa}{2}\right) + \sigma^{2}(p-1)\left(1+\delta\right) \right\}\,|X^{i}_{t}|^{p}  \\
    & \quad \quad + \left\{ \lambda\,\frac{\kappa}{2} + \sigma^{2}(p-1)\left(1+\delta\right)\kappa^{2} \right\}\,|\mathfrak{m}_{\alpha}(\rho^{N}_{t})|^{2}\, |X^{i}_{t}|^{p-2}\\
    & \quad \quad \quad \quad + \frac{\sigma^{2}}{2}\left\{ (d+p-2)\delta^{2} + d(p-1)\delta\right\}|X^{i}_{t}|^{p-2}.
\end{align*}

We can now use \eqref{lemeq2} with $q=2$ therein, which ensures the existence of $C_{1}>0$ depending only on the function $f(\cdot)$ and on $\alpha$, such that we have
\begin{equation*}
    |\mathfrak{m}_{\alpha}(\rho^{N}_{t})|^{2} \leq \frac{C_{1}}{N}\sum\limits_{j=1}^{N}|X^{j}_{t}|^{2}.
\end{equation*}
Therefore we have
\begin{equation}\label{eq: intermediate}
    \begin{aligned}
        |X^{i}_{t}|^{p-2} \,|\mathfrak{m}_{\alpha}(\rho^{N}_{t})|^{2} 
        & =  \frac{C_{1}}{N} \, \left(|X^{i}_{t}|^{p}+ \, \sum\limits_{j=1,j\neq i}^{N}|X^{i}_{t}|^{p-2}|X^{j}_{t}|^{2} \right).
    \end{aligned}
\end{equation}
Using H\"older's inequality ($\frac{p-2}{p} + \frac{2}{p} =1$ and $p>2$), one gets
\begin{equation*}
    \begin{aligned}
        \mathbb{E}\left[|X^{i}_{t}|^{p-2}|X^{j}_{t}|^{2}\right] & \leq \bigg(\mathbb{E}\left[(|X^{i}_{t}|^{p-2})^{\frac{p}{p-2}}\right]\bigg)^{\frac{p-2}{p}}\, \bigg(\mathbb{E}\left[(|X^{j}_{t}|^{2})^{\frac{p}{2}}\right]\bigg)^{\frac{2}{p}}\\
        & = \bigg(\mathbb{E}\left[|X^{i}_{t}|^{p}\right]\bigg)^{\frac{p-2}{p}}\, \bigg(\mathbb{E}\left[|X^{j}_{t}|^{p}\right]\bigg)^{\frac{2}{p}}\\
        & = \mathbb{E}\left[|X^{i}_{t}|^{p}\right] \quad \text{ (using the symmetry of the particles)}
    \end{aligned}
\end{equation*}
Taking the expectation in \eqref{eq: intermediate}, and together with the latter observation, ones gets
\begin{equation*}
    \begin{aligned}
        \mathbb{E}\left[|X^{i}_{t}|^{p-2} \,|\mathfrak{m}_{\alpha}(\rho^{N}_{t})|^{2}\right] 
        & \leq \frac{C_{1}}{N} \, \left(\mathbb{E}\left[|X^{i}_{t}|^{p}\right] + \sum\limits_{j=1,j\neq i}^{N} \mathbb{E}\left[|X^{i}_{t}|^{p-2}|X^{j}_{t}|^{2}\right] \right)  \leq C_{1}\, \mathbb{E}\left[|X^{i}_{t}|^{p}\right]. 
    \end{aligned}
\end{equation*}
On the other hand, using  Jensen's inequality, one gets for all $ 3\leq p<\infty $
\begin{align*}
    \mathbb{E}[|X^{i}_{t}|^{p-2}] & = \mathbb{E}\left[(|X^{i}_{t}|^{p})^{\frac{p-2}{p}}\right] \\
    & \leq \left(  \mathbb{E}\left[ |X^{i}_{t}|^{p}\right] \right)^{\frac{p-2}{p}} 
    \leq 
    \left\{
        \begin{aligned}
            & 1, \quad \text{ if }\quad \mathbb{E}\left[ |X^{i}_{t}|^{p} \right] \leq 1\\
            & \mathbb{E}\left[ |X^{i}_{t}|^{p} \right], \quad \text{otherwise.}
        \end{aligned}
    \right. \quad
    \leq 1 + \mathbb{E}\left[ |X^{i}_{t}|^{p}\right].
\end{align*}
Therefore 
\begin{align*}
    \mathbb{E}\left[F(t,X^{i}_{t}) \right]
    & \leq \bigg\{ -\lambda\left(1 -\frac{\kappa}{2}\right) + \sigma^{2}(p-1) + C_{1}\,\lambda\,\frac{\kappa}{2} + C_{1}\,\sigma^{2}(p-1)\kappa^{2}\\
    & \quad \quad \quad \quad \quad \quad + \frac{\sigma^{2}}{2}\left\{ (d+p-2)\delta^{2} + d(p-1)\delta\right\} \bigg\}\,\mathbb{E}\left[|X^{i}_{t}|^{p}\right]\\
    & \quad \quad \quad \quad \quad \quad \quad \quad \quad \quad + \frac{\sigma^{2}}{2}\left\{ (d+p-2)\delta^{2} + d(p-1)\delta\right\}.
\end{align*}
This finally yields
\begin{align*}
    \frac{\rd}{\rd t}\mathbb{E}\left[\frac{1}{p}|X^{i}_{t}|^{p}\right] & = \mathbb{E}\left[F(t,X^{i}_{t})\right]
     \leq -\gamma\,\mathbb{E}\left[|X_{t}^{i}|^{p}\right] + C \quad \forall\, 3<p<\infty
\end{align*}
where
\begin{align*}
    & \gamma :=  \lambda\left(1 -\frac{\kappa}{2}\right) - \sigma^{2}(p-1) - C_{1}\,\lambda\,\frac{\kappa}{2} - C_{1}\,\sigma^{2}(p-1)\kappa^{2} - C\\
    & \text{and } \quad C = \frac{\sigma^{2}}{2}\left\{ (d+p-2)\delta^{2} + d(p-1)\delta\right\}.
\end{align*}
Now we must choose $\lambda,\sigma$, and $\kappa$ in order to guarantee $\gamma>0$. Let us first recall that $\kappa\in (0,1)$, hence $\lambda(1-\kappa/2)>\lambda/2$. Then, for $\gamma>0$ it suffices to have
\begin{align*}
    &  \frac{\lambda}{2} - \sigma^{2}(p-1) - C_{1}\,\lambda\,\frac{\kappa}{2} - C_{1}\,\sigma^{2}(p-1)\kappa^{2} - C >0\\
    \Leftrightarrow \quad & \frac{\lambda}{2}\left(1 -C_{1}\,\kappa\right) - \sigma^{2}(p-1)\left(1+C_{1}\,\kappa^{2}\right) - C >0.
\end{align*}
If we now choose $\kappa<\frac{1}{2\,C_{1}}$, then $1-C_{1}\,\kappa>1/2$ and $1+C_{1}\,\kappa^{2}< \frac{1}{4\,C_{1}}+ 1$. Thus we have
\begin{align*}
    & \frac{\lambda}{2}\left(1 -C_{1}\,\kappa\right) - \sigma^{2}(p-1)\left(1+C_{1}\,\kappa^{2}\right) - C  > \frac{\lambda}{4} - \sigma^{2}(p-1)\left(1+\frac{1}{4\,C_{1}}\right) - C.
\end{align*}
Sufficient conditions for the latter to be positive is to have
\begin{equation}
\label{configuration}
    \lambda > \frac{\sigma^{2}(p-1)}{C_{1}} + 4 \,C,\quad \quad \text{ and } 0<\kappa < \frac{1}{2\,C_{1}}.
\end{equation}
Ultimately, one gets for all $3\leq p<\infty$
\begin{equation*}
    \frac{\rd}{\rd t}\mathbb{E}\left[|X^{i}_{t}|^{p}\right] \leq -\gamma\,\mathbb{E}\left[|X_{t}^{i}|^{p}\right] + C
\end{equation*}
which together with Gr\"{o}nwall's inequality yields
\begin{equation*}
    \mathbb{E}\left[ |X^{i}_{t}|^{p} \right] \leq \mathbb{E}\left[ |X^{i}_{0}|^{p} \right]\, e^{-\gamma\,t } + C \, \leq \mathbb{E}\left[ |X^{i}_{0}|^{p} \right] + C.
\end{equation*}
The conclusion follows noting that the latter is true for any arbitrarily fixed $t>0$, and $\mathbb{E}\left[ |X^{i}_{0}|^{p} \right] = \mathbb{E}\left[ |X^{j}_{0}|^{p} \right]$ for all $i,j \in [N]$.

The case of $p=2$ is trivial from our first bound on $F(t,X^{i}_{t})$. 
\end{proof}

The following lemma is instrumental for our main result. In particular, it will ensure a uniform--in--time estimate of the consensus point.

\begin{lemma}{\cite[Theorem 1, \& \S3.1]{doukhan2009evaluation}}\label{lemconver}
Let $(\omega_j,V_j)_{j\in\NN}$ be i.i.d. random variables with values in $\RR\times\RR^d$ with $\omega_j>0$ almost surely. Define 
\begin{align*}
    & \hat{\mc{W}}_N=\frac{1}{N}\sum_{j=1}^{N}\omega_j, \quad \hat{\mc{V}}_N=\frac{1}{N}\sum_{j=1}^{N}\omega_jV_j,\quad \hat{\mc R}_N=\frac{\hat{\mc{V}}_N}{\hat{\mc{W}}_N},\,\\
    & \text{and } \quad \mc W=\EE[\hat{\mc{W}}_N], \quad \quad  \mc{V}=\EE[\hat{\mc{V}}_N], \quad \quad  \mc{R}=\frac{\mc V}{\mc W}.
\end{align*}
Let $q>2$ and assume that for some $c>0$ it holds
\begin{equation*}
    \EE[|\omega_1|^q]\leq c,\quad \EE[|V_1|^{r_1}]\leq c,\quad \EE[|\omega_1V_1|^{r_2}]\leq c,\quad r_1:=\frac{2(q+2)}{q-2},\quad r_2:=\frac{2q}{q-2}
\end{equation*}
Then there exists some $C>0$ depending only on $c,q,r_1,r_2$ such that
\begin{equation*}
    \EE[|\hat{\mc R}_N-\mc R|^2]\leq C\frac{1}{N}\,.
\end{equation*}
\end{lemma}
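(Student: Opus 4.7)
The plan is to start from the algebraic decomposition
\begin{equation*}
    \hat{\mc R}_N - \mc R \;=\; \frac{1}{\hat{\mc W}_N}\cdot \frac{1}{N}\sum_{j=1}^{N}\omega_j(V_j - \mc R),
\end{equation*}
which uses $\mc V = \mc R\,\mc W$ to turn $\hat{\mc R}_N - \mc R$ into an i.i.d.\ mean-zero sum normalized by the random denominator $\hat{\mc W}_N$. The central difficulty is that $1/\hat{\mc W}_N$ is not uniformly bounded. My remedy will be to split the expectation according to whether $\hat{\mc W}_N$ stays close to $\mc W$ or not: I introduce the event $A_N := \{|\hat{\mc W}_N - \mc W|\leq \mc W/2\}$, on which $\hat{\mc W}_N \geq \mc W/2$. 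In what follows, $C$ denotes a generic constant depending only on $c, q, r_1, r_2, \mc W$ and whose value may change from line to line.

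On the good event $A_N$, the deterministic bound $1/\hat{\mc W}_N \leq 2/\mc W$ combined with the standard i.i.d.\ variance computation yields
\begin{equation*}
    \EE\big[|\hat{\mc R}_N - \mc R|^{2}\,\mathbf{1}_{A_N}\big] \;\leq\; \frac{4}{\mc W^{2}}\cdot \frac{\mathrm{Var}(\omega_1(V_1-\mc R))}{N} \;\leq\; \frac{C}{N},
\end{equation*}
where the finite second moment of $\omega_1(V_1-\mc R)$ follows from $\EE[|\omega_1 V_1|^{r_2}]\leq c$ and $\EE[|\omega_1|^q]\leq c$ via Jensen's inequality, noting that $r_2, q > 2$.

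For the complementary bad event $A_N^{c}$, I would use two ingredients. First, the Marcinkiewicz--Zygmund (Rosenthal) inequality applied to the i.i.d.\ sum with $q$-th moment gives the concentration
\begin{equation*}
    \mathbb{P}(A_N^{c}) \;\leq\; \frac{2^{q}}{\mc W^{q}}\,\EE\big[|\hat{\mc W}_N - \mc W|^{q}\big] \;\leq\; \frac{C}{N^{q/2}}.
\end{equation*}
Second, I would use the crude but decisive bound $|\hat{\mc R}_N|\leq \max_{1\leq j\leq N} |V_j|$, which follows from writing $\hat{\mc R}_N$ as a convex combination of the $V_j$'s with random positive weights $\omega_j / \sum_k \omega_k$. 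Combining this with $|\hat{\mc R}_N - \mc R|^2 \leq 2|\hat{\mc R}_N|^2 + 2|\mc R|^2$ and applying H\"older's inequality with conjugate exponents $r_1/2$ and $r_1/(r_1-2)$ gives
\begin{equation*}
    \EE\!\left[|\hat{\mc R}_N|^{2}\mathbf{1}_{A_N^{c}}\right] \;\leq\; \EE\!\left[({\textstyle\max_j}\, |V_j|)^{r_1}\right]^{2/r_1}\,\mathbb{P}(A_N^{c})^{1-2/r_1} \;\leq\; (cN)^{2/r_1}\!\left(\tfrac{C}{N^{q/2}}\right)^{1-2/r_1},
\end{equation*}
where the last step uses $\EE[(\max_j|V_j|)^{r_1}] \leq N\,\EE[|V_1|^{r_1}]\leq cN$. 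A brief computation shows the resulting exponent of $N$ equals exactly $-1$ precisely when $r_1 = 2(q+2)/(q-2)$; this pinpoints the origin of the threshold for $r_1$. Together with the trivial estimate $2|\mc R|^2\,\mathbb{P}(A_N^c)\leq C/N^{q/2}\leq C/N$ and the good-event bound, one obtains the conclusion $\EE[|\hat{\mc R}_N - \mc R|^2]\leq C/N$.

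The main obstacle will be the sharp calibration of the H\"older trade-off on the bad event: one has to balance the linear-in-$N$ growth of $\EE[(\max_j |V_j|)^{r_1}]$ against the $N^{-q/2}$ decay of $\mathbb{P}(A_N^c)$, and it is this very balance that forces the specific value $r_1 = 2(q+2)/(q-2)$ appearing in the statement. The role of $r_2$ is to guarantee finite variance on the good event, and it becomes critical under the alternative decomposition $\hat{\mc R}_N - \mc R = (\hat{\mc V}_N - \mc V)/\hat{\mc W}_N - \mc V(\hat{\mc W}_N - \mc W)/(\hat{\mc W}_N\mc W)$, for which independent concentration estimates on both $\hat{\mc V}_N - \mc V$ and $\hat{\mc W}_N - \mc W$ are needed via the same Marcinkiewicz--Zygmund bound, with moment orders $r_2$ and $q$ respectively; the threshold $r_2 = 2q/(q-2)$ emerges from an identical exponent-matching argument.
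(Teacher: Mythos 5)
The paper itself gives no proof of this lemma; it is quoted from Doukhan and Lang, so there is no internal argument to compare against. Your proof is correct and, as far as I can tell, reproduces the natural argument of the cited source: the identity $\hat{\mc R}_N - \mc R = \hat{\mc W}_N^{-1}\cdot N^{-1}\sum_j\omega_j(V_j-\mc R)$ with $\EE[\omega_j(V_j-\mc R)]=0$, the split on $A_N=\{|\hat{\mc W}_N-\mc W|\le \mc W/2\}$, a variance bound on $A_N$, and on $A_N^{c}$ the combination of the convex-combination inequality $|\hat{\mc R}_N|\le\max_j|V_j|$, Rosenthal/Marcinkiewicz--Zygmund to get $\mathbb{P}(A_N^{c})\le C N^{-q/2}$, and H\"older with exponent $r_1/2$. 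Your exponent bookkeeping correctly shows that $r_1 = 2(q+2)/(q-2)$ makes the bad-event contribution exactly of order $N^{-1}$.

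Two small remarks, neither a gap. First, as you acknowledge, every constant in your proof also depends on $\mc W=\EE[\omega_1]$ (through the factor $4/\mc W^2$ on $A_N$ and the Markov step on $A_N^{c}$, and through $|\mc R|=|\mc V|/\mc W$). The paper's phrasing ``depending only on $c,q,r_1,r_2$'' elides this, but it is harmless in the application (Lemma \ref{lembarrhoN}) because there $\mc W=\EE[e^{-\alpha f(\OX_t^1)}]$ is bounded below uniformly in $t$ thanks to the uniform moment bound of Theorem \ref{thm: finite moments} together with Assumption \ref{assum1}. Second, your good-event estimate only uses $\EE[|\omega_1V_1|^{r_2}]<\infty$ with $r_2>2$, which is weaker than the specific value $r_2=2q/(q-2)$; that does not affect validity, since you still prove the stated conclusion under the stated hypotheses, and you correctly identify the alternative decomposition where the sharper $r_2$ threshold actually bites.
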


\begin{lemma}\label{lembarrhoN}
	Consider i.i.d.  particles  $\{(\OX_t^i)_{t\geq 0}\}_{i=1}^N$  with $\rho_0^{\otimes N}-$ distributed initial data \eqref{CBO kappa}, and let $\bar \rho_t^N$ be the corresponding empirical measure. Denote by $\rho_t$ the law of $\OX_t^i$ for any $i\in[N]$ satisfying $\sup_{t\geq 0}\rho_t(|\cdot|^4)<\infty$. Then it holds that
	\begin{equation*}
		\sup_{t\geq 0}\,\EE[\,|\mm_\alpha(\bar\rho_t^N)-\mm_\alpha(\rho_t)|^2\,]\,\leq\, C_3\frac{1}{N}\,,
	\end{equation*}
	where $C_3$ depends only on $\sup_{t\geq 0}\rho_t(|\cdot|^4),\underline f,\alpha$
\end{lemma}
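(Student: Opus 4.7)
The plan is to apply Lemma \ref{lemconver} pointwise in $t$, with $\omega_j := \omega_{\alpha}^{f}(\OX_t^j)=e^{-\alpha f(\OX_t^j)}$ and $V_j := \OX_t^j$, and then verify that the resulting constant $C$ is independent of $t$. With this choice, the empirical ratio $\hat{\mc R}_N$ coincides exactly with $\mm_\alpha(\bar\rho_t^N)$ (divide numerator and denominator of the definition \eqref{XaN} by $N$), and the corresponding deterministic ratio $\mc R$ coincides with $\mm_\alpha(\rho_t)$. So the conclusion of Lemma \ref{lemconver} is precisely the statement to prove.

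The only non-trivial work is to verify the moment hypotheses of Lemma \ref{lemconver} uniformly in $t\geq 0$. Since $f\geq \underline f$ by Assumption \ref{assum1}(1), one has $0<\omega_1\leq e^{-\alpha \underline f}$, which immediately gives $\EE[|\omega_1|^q]\leq e^{-q\alpha\underline f}$ for any $q>0$, uniformly in $t$. The moment conditions on $V_1$ and $\omega_1 V_1$ will be controlled by $\sup_{t\geq 0}\rho_t(|\cdot|^{r})$ for appropriate $r$: the boundedness of $\omega_1$ reduces $\EE[|\omega_1 V_1|^{r_2}]$ to a bound on $\EE[|\OX_t|^{r_2}]$, and $\EE[|V_1|^{r_1}]=\EE[|\OX_t|^{r_1}]$ directly.

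Thus it suffices to choose $q>2$ so that both $r_1=\frac{2(q+2)}{q-2}\leq 4$ and $r_2=\frac{2q}{q-2}\leq 4$. Taking $q=6$ gives $r_1=4$ and $r_2=3$, both of which are controlled by the assumption $\sup_{t\geq 0}\rho_t(|\cdot|^4)<\infty$. All three moment bounds can therefore be taken uniform in $t$, with a constant $c$ depending only on $\underline f$, $\alpha$, and $\sup_{t\geq 0}\rho_t(|\cdot|^4)$. Applying Lemma \ref{lemconver} yields the desired inequality at every $t\geq 0$ with the same constant $C_3$, which is exactly the uniform-in-time bound claimed.

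The proof is therefore straightforward once the correct identification with Lemma \ref{lemconver} is made; there is no real obstacle, as the genuinely hard work was already done in obtaining the uniform-in-time fourth moment bound on $\rho_t$ (via Theorem \ref{thm: finite moments}) and the fact that $\omega_\alpha^f$ is a bounded weight. The slight subtlety to flag is that the $\rho_t$-moment hypothesis here is on the law of a single mean-field particle and not on the empirical measure, so we rely on the first assertion of Theorem \ref{thm: finite moments} (with $p=4$, i.e.\ $\rho_0\in \mathscr{P}_{8}(\RR^d)$, which is guaranteed by the standing assumption $\rho_0 \in \mathscr{P}_{16}(\RR^d)$) to make the uniform-in-time hypothesis $\sup_{t\geq 0}\rho_t(|\cdot|^4)<\infty$ consistent with the rest of the paper.
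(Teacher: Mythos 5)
Your proof is correct and follows essentially the same route as the paper: both apply Lemma \ref{lemconver} with $(\omega_j^t,V_j^t)=(e^{-\alpha f(\OX_t^j)},\OX_t^j)$, $q=6$, $r_1=4$, $r_2=3$, verify the moment hypotheses uniformly in $t$ using the almost-sure bound $\omega_j^t\leq e^{-\alpha\underline f}$ and the assumed uniform fourth moment of $\rho_t$, and read off the uniform constant $C_3$. The only cosmetic difference is how you reduce the bound on $\EE[|\omega_1^t V_1^t|^3]$ (you use the a.s.\ bound on $\omega_1^t$ directly, the paper uses a Hölder split), which is immaterial.
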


\begin{proof}
	We apply Lemma \ref{lemconver} with $(\omega_j^t,V_j^t)=(e^{-\alpha f(\OX_t^j)},\OX_t^j)$,\, $q=6$,\, $r_1=4$,\, and $r_2=3$.  Since $|\omega_j^t|\leq e^{-\alpha\underline f}$, it obviously holds that $\sup_{t\geq 0}\EE[|\omega_1^t|^6]\leq e^{-6\alpha\underline f}$. Notice that $\EE[|V_1^t|^4]=\EE[|\OX_t^1|^4]\leq C_2 $, then one can also check
	\begin{equation*}
		\sup_{t\geq 0}\EE[\,|\omega_1^tV_1^t|^3\,] \; \leq \; \sup_{t\geq 0}\EE[\,|\omega_1^t|^{12}\,]^{\frac{1}{4}}\;\sup_{t\geq 0}\EE[\,|V_1^t|^4\,]^{\frac{3}{4}}<\infty\,.
	\end{equation*}
	Consequently the assumptions  of Lemma \ref{lemconver} are satisfied, and the statement is obtained.
\end{proof} 
\begin{remark}
    According to Theorem \ref{thm: finite moments}, the assumption $\sup_{t\geq 0}\rho_t(|\cdot|^4)<\infty$ can be guaranteed by choosing $\rho_0\in \mathscr{P}_8(\RR^d)$.
\end{remark}

This lemma leads to the following uniform--in--time mean--field limit estimate which is the main result of this manuscript. 
\begin{theorem}\label{thmmean}
	Assume that  $f(\cdot)$ satisfy Assumption \ref{assum1}. Let $\{(X_t^i)_{t\geq 0}\}_{i=1}^N$ and $\{(\OX_t^i)_{t\geq 0}\}_{i=1}^N$ be the solutions to the interacting particle system \eqref{CBOkappa particle} and the mean-field dynamics \eqref{CBO kappa} respectively  with the same initial data $\{(X_0^i)\}_{i=1}^N$ (i.i.d. according to $\rho_0\in \mathscr{P}_{16}(\RR^d)$) and the same Brownian motions $\{(B_t^i)_{t\geq 0}\}_{i=1}^N$. Then, for sufficiently small $\kappa$ and large $\lambda$, for example $\lambda > 3\,\sigma^{2}$ and $\kappa<\frac{1}{2(1+L)}$,  where $L$ depends only on $C_2,L_f,s$ and $\alpha$, there  exists some $C_{\mathrm{MFL}}>0$ such that it holds
	\begin{equation}
		\sup\limits_{t\geq 0}\,\sup\limits_{i=1,\dots,N}\,\EE[\,|X_t^i-\OX_t^i|^2\,]\,\leq\, C_{\mathrm{MFL}}N^{-1}\,.
	\end{equation}
	where  $C_{\mathrm{MFL}}$ depends only on $\lambda,\sigma,\kappa,C_2,C_3$ and $L$.
\end{theorem}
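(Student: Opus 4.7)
The plan is to exploit the synchronous coupling (same initial data $X_0^i$ and same Brownian motions $B^i$) to set $Z_t^i := X_t^i - \OX_t^i$, and to prove a one--sided differential inequality of the form
\begin{equation*}
    \frac{\rd}{\rd t}\mathbb{E}[|Z_t^i|^2] \leq -\gamma_0\,\mathbb{E}[|Z_t^i|^2] + \gamma_1\,\mathbb{E}\bigl[|\mathfrak{m}_\alpha(\rho_t^N) - \mathfrak{m}_\alpha(\rho_t)|^2\bigr],
\end{equation*}
with $\gamma_0>0$ strictly positive as soon as $\lambda$ dominates a multiple of $\sigma^2$, and $\gamma_1 = \mathcal O(\kappa^2)$. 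This last point is the key gain of the rescaled model \eqref{CBO kappa}: the consensus point enters the drift and diffusion only through the factor $\kappa$, so the forcing term inherits a $\kappa^2$ prefactor that can be tuned small enough to close the argument.

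To obtain the differential inequality, I would apply It\^o's formula to $|Z_t^i|^2$ (the $\delta\,\mathds{I}_d$ parts of the diffusion cancel in the difference). Using $||a|-|b||\leq |a-b|$ componentwise, the trace of the squared diffusion difference is dominated by $\sigma^2|Z_t^i - \kappa\Delta_t|^2$ with $\Delta_t := \mathfrak{m}_\alpha(\rho_t^N) - \mathfrak{m}_\alpha(\rho_t)$, and Young's inequality absorbs the cross terms into $|Z_t^i|^2$ and $\kappa^2|\Delta_t|^2$. Exchanging expectation and time derivative is legitimate thanks to the uniform moment bounds of Theorem \ref{thm: finite moments}, exactly as in the proof of that theorem. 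This leads to $\gamma_0$ of the form $\lambda - 2\sigma^2$ (up to factors coming from Young, whence the condition $\lambda > 3\sigma^2$) and $\gamma_1$ of the form $2(\lambda+2\sigma^2)\kappa^2$.

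Next, the forcing term is split as $|\Delta_t|^2 \leq 2|\mathfrak{m}_\alpha(\rho_t^N) - \mathfrak{m}_\alpha(\bar\rho_t^N)|^2 + 2|\mathfrak{m}_\alpha(\bar\rho_t^N) - \mathfrak{m}_\alpha(\rho_t)|^2$. The second piece is controlled uniformly in $t$ by $2C_3/N$ via Lemma \ref{lembarrhoN} (its hypothesis $\sup_t \rho_t(|\cdot|^4) < \infty$ is secured by $\rho_0\in\mathscr{P}_{16}$ together with Theorem \ref{thm: finite moments}). For the first piece I would invoke the local Lipschitz estimate \eqref{lemeq1}: Theorem \ref{thm: finite moments} produces a uniform bound $C_2$ on the expected moments of both empirical measures $\rho_t^N$ and $\bar\rho_t^N$, which supplies a Lipschitz constant $L=L(C_2,L_f,s,\alpha)$. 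Combined with the particle--level bound $W_2^2(\rho_t^N,\bar\rho_t^N) \leq \tfrac{1}{N}\sum_{j=1}^N |Z_t^j|^2$ and the exchangeability $\mathbb E[|Z_t^j|^2] = \mathbb E[|Z_t^1|^2]$, this yields $\mathbb E[|\mathfrak{m}_\alpha(\rho_t^N) - \mathfrak{m}_\alpha(\bar\rho_t^N)|^2]\leq L^2\,\mathbb E[|Z_t^1|^2]$. Substituting back,
\begin{equation*}
    \frac{\rd}{\rd t}\mathbb{E}[|Z_t^1|^2] \leq -\bigl(\gamma_0 - 2\gamma_1 L^2\bigr)\,\mathbb{E}[|Z_t^1|^2] + \frac{2\gamma_1 C_3}{N}.
\end{equation*}
Choosing $\kappa$ small (the threshold $\kappa<(2(1+L))^{-1}$ makes the prefactor strictly negative), and noting $Z_0^i=0$, a standard Gr\"onwall argument yields the uniform--in--time bound $\sup_{t\geq 0} \mathbb E[|Z_t^1|^2] \leq C_{\mathrm{MFL}}/N$.

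\textbf{Main obstacle.} The delicate point is the rigorous use of \eqref{lemeq1} between the two empirical measures, since their moments $\rho_t^N(|\cdot|^p)$ and $\bar\rho_t^N(|\cdot|^p)$ are random and not deterministically bounded. The workaround is that \eqref{lemeq1} requires only one measure to lie in $\mathscr P_{p,R}$; one may therefore partition the probability space according to the event $\{\bar\rho_t^N(|\cdot|^p) \leq R\}$, which is high probability thanks to Markov's inequality and Theorem \ref{thm: finite moments}, and on the complementary event control the crude contribution by the bound \eqref{lemeq2} and the uniform moment estimates. The reason this is not ruinous is precisely the $\kappa^2$ prefactor: even though $L$ may be large (growing with $\alpha$), it enters the closing inequality only through the product $\kappa^2 L^2$, which is made arbitrarily small by the rescaling. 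Handling this stability issue cleanly while keeping all constants uniform in $t$ is the technical crux of the proof.
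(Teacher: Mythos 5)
Your overall architecture matches the paper's: synchronous coupling, It\^o's formula for $|X_t^i-\OX_t^i|^2$, the split $|\Delta_t|^2 \lesssim |\mathfrak{m}_\alpha(\rho_t^N)-\mathfrak{m}_\alpha(\bar\rho_t^N)|^2 + |\mathfrak{m}_\alpha(\bar\rho_t^N)-\mathfrak{m}_\alpha(\rho_t)|^2$, Lemma \ref{lembarrhoN} for the second piece, the local Lipschitz bound \eqref{lemeq1} on a ``good'' event for the first, and Gr\"onwall with $Z_0^i=0$. Two points, however, deserve attention.

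First, and most substantively, your workaround in the last paragraph does not close as stated. You propose to isolate the bad event $\{\bar\rho_t^N(|\cdot|^p) > R\}$ and control its probability by ``Markov's inequality and Theorem \ref{thm: finite moments}.'' Plain Markov gives $\mathbb{P}(\bar\rho_t^N(|\cdot|^p) > R) \leq C_2/R$, which is small in $R$ but does not decay in $N$. When you then estimate the bad-event contribution by Cauchy--Schwarz together with the crude bound \eqref{lemeq2}, you obtain $\bigl(\EE[|\mathfrak{m}_\alpha(\rho_t^N)-\mathfrak{m}_\alpha(\bar\rho_t^N)|^4]\bigr)^{1/2}\,\mathbb{P}(\text{bad})^{1/2} \lesssim R^{-1/2}$, which is $O(1)$ in $N$. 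Trying to make $R = R(N)\to\infty$ to fix this instead inflates the Lipschitz constant $L$ in \eqref{lemeq1}, which depends on $R$, and then $\kappa$ would need to shrink with $N$ --- the argument does not close. What is actually needed is the sharp $O(N^{-2})$ concentration bound on the event $A_{N,t} = \{\tfrac1N\sum_j|\OX_t^j|^2 \geq S\}$ supplied by Lemma \ref{lemprob}, a fourth-moment Chebyshev estimate for sums of centered i.i.d.\ variables. The $N^{-2}$ rate (not merely ``high probability'') is essential: via Cauchy--Schwarz it yields a bad-event contribution of order $N^{-1}$, matching the main term. Note also that this device works precisely because $\bar\rho_t^N$ is built from i.i.d.\ copies of the mean-field process --- which is why only that measure, and not $\rho_t^N$, needs to lie in $\mathscr P_{2,R}$.

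Second, the claim that the forcing prefactor $\gamma_1$ is $\mathcal O(\kappa^2)$ is inaccurate. The drift contribution, after Young's inequality, produces a term $\tfrac{\lambda\kappa}{2}|\Delta_t|^2$; only the diffusion contributes the $\mathcal O(\kappa^2)$ piece. The paper's prefactor is $\lambda\kappa + 2\sigma^2\kappa^2 = \mathcal O(\kappa)$. This does not break your argument --- the closing condition still reads $\kappa < (2(1+L))^{-1}$, scaling like $1/L$ rather than $1/L^2$ --- but your stated rationale for why rescaling helps (``$\kappa^2 L^2$ made arbitrarily small'') should be $\kappa L$ made small. The qualitative conclusion is the same; the quantitative bookkeeping should be corrected.
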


\begin{remark}
	In particular, notice that here $C_{\mathrm{MFL}}$ is independent of time $t$.
\end{remark}

To prove this theorem, we need the following lemma on the bound of the probability of large excursions.
\begin{lemma}{\cite[Lemma 2.5]{gerber2023mean} }\label{lemprob}
    Let $(Z_j)_{j\in\NN}$ be a family of i.i.d. $\RR$-valued random variables such that $\EE[|Z_1|^4]<\infty$. Then for all $S>\EE[|Z_1|]$, there exists a constant $C_4>0$ depending only on $S$ and $\EE[|Z_1|^4]$  such that for any $N\in\NN$ it holds
    \begin{equation*}
    \mathbb{P}\left(\frac{1}{N}\sum_{j=1}^{N}Z_j\geq S\right)\leq C_4\frac{1}{N^2}\,.
    \end{equation*}
\end{lemma}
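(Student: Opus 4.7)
The plan is to reduce the claim to a fourth-moment Markov inequality applied to a centered sum. Let $\mu := \EE[Z_1]$ and set $Y_j := Z_j - \mu$, so that the $Y_j$ are i.i.d., mean-zero, and (using the convexity bound $|a+b|^4\leq 8(|a|^4+|b|^4)$ together with Jensen $|\mu|^4 \leq \EE[|Z_1|^4]$) satisfy $\EE[|Y_1|^4] \leq 16\,\EE[|Z_1|^4] < \infty$. Since $S>\mu$, the quantity $\varepsilon := S-\mu$ is strictly positive, and the event $\{\frac{1}{N}\sum_j Z_j \geq S\}$ coincides with $\{\frac{1}{N}\sum_j Y_j \geq \varepsilon\}$. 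On this event $\sum_j Y_j\geq N\varepsilon>0$, so raising to the fourth power preserves the inequality, giving the inclusion $\{\tfrac{1}{N}\sum_j Y_j \geq \varepsilon\} \subseteq \{(\sum_j Y_j)^4 \geq (N\varepsilon)^4\}$.

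Applying Markov's inequality then yields
\begin{equation*}
  \mathbb{P}\!\left(\tfrac{1}{N}\sum_{j=1}^N Z_j \geq S\right) \,\leq\, \frac{\EE\big[\big(\sum_{j=1}^N Y_j\big)^4\big]}{(N\varepsilon)^4}.
\end{equation*}
The key remaining step is to establish that the numerator grows only as $N^2$. Expanding the fourth power, $\EE[(\sum_j Y_j)^4] = \sum_{j_1,j_2,j_3,j_4} \EE[Y_{j_1}Y_{j_2}Y_{j_3}Y_{j_4}]$, independence together with $\EE[Y_j]=0$ kills every term in which at least one index appears exactly once. The surviving tuples are of two types: the $N$ diagonal tuples with all four indices equal, contributing $N\,\EE[Y_1^4]$; and the tuples in which two distinct indices each appear exactly twice. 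A direct combinatorial count gives $3N(N-1)$ of the latter (three pair-partitions of $\{1,2,3,4\}$ times $N(N-1)$ ordered choices of distinct index values), each contributing $(\EE[Y_1^2])^2$. Thus
\begin{equation*}
    \EE\!\left[\Big(\sum_{j=1}^N Y_j\Big)^4\right] \,=\, N\,\EE[Y_1^4] \,+\, 3N(N-1)\,(\EE[Y_1^2])^2 \,\leq\, 4\,N^2\,\EE[Y_1^4] \,\leq\, 64\,N^2\,\EE[|Z_1|^4],
\end{equation*}
where we used Cauchy--Schwarz $(\EE[Y_1^2])^2\leq \EE[Y_1^4]$. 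Plugging this back into the Markov estimate produces the desired bound with an explicit constant $C_4 = 64\,\EE[|Z_1|^4]/(S-\mu)^4$, which depends on the data only through $S$ and $\EE[|Z_1|^4]$ (since $|\mu|\leq \EE[|Z_1|^4]^{1/4}$).

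There is no genuine obstacle in this argument; the only subtle point is ensuring the positivity of the centered deviation $\varepsilon = S - \EE[Z_1]$, which is exactly the content of the hypothesis $S>\EE[Z_1]$ and which is needed both to take the fourth power while preserving the one-sided inequality and to keep the denominator $(S-\mu)^4$ bounded away from zero. Everything else is routine bookkeeping of Wick-style pairings of the i.i.d. mean-zero variables.
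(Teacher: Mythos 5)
The paper itself gives no proof of this lemma; it is quoted directly from \cite[Lemma 2.5]{gerber2023mean}, so there is no internal argument to compare against. Your proof — centering $Y_j=Z_j-\EE[Z_1]$, the inclusion of the event into $\{(\sum_j Y_j)^4\ge (N\varepsilon)^4\}$, Markov's inequality, and the pairing count $\EE[(\sum_j Y_j)^4]=N\,\EE[Y_1^4]+3N(N-1)(\EE[Y_1^2])^2$ — is the standard fourth-moment argument for such an $N^{-2}$ tail bound, and it is correct; it is essentially the proof one finds in the cited reference. One small caveat: the constant you obtain, $64\,\EE[|Z_1|^4]/(S-\EE[Z_1])^4$, depends on the law of $Z_1$ also through the gap $S-\EE[Z_1]$, and this cannot in general be bounded using only $S$ and $\EE[|Z_1|^4]$ (the gap may be arbitrarily small with both quantities fixed), so your closing parenthetical claim — and the dependence announced in the statement itself — should be read as including $S-\EE[|Z_1|]$. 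This imprecision is inherited from the quoted statement rather than from your argument, and it is harmless where the lemma is applied in the paper, since there $S=C_2$ dominates $\sup_{t\ge 0}\EE[|Z_1^t|]$ with a margin uniform in $t$.
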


\begin{proof}[Proof of Theorem \ref{thmmean}] 
We have 
\begin{align*}
    \rd (X^{i}_{t} - \overline{X}^{i}_{t}) & = - \lambda \bigg((X^{i}_{t} - \overline{X}^{i}_{t}) - \kappa \big(\mathfrak{m}_{\alpha}(\rho^{N}_{t}) - \mathfrak{m}_{\alpha}(\rho_{t})\big)\bigg)\,\rd t\\
    & \quad \quad \quad + \sigma\bigg( D(X^{i}_{t} - \kappa\, \mathfrak{m}_{\alpha}(\rho^{N}_{t})) - D(\overline{X}^{i}_{t} - \kappa\,\mathfrak{m}_{\alpha}(\rho_{t})) \bigg)\, \rd B^{i}_{t}.
\end{align*}
We introduce $h:\mathbb{R}^{d}\to \mathbb{R}$, such that $h(z) = \frac{1}{2}|z|^{2}$. Hence, $\nabla h(z) = z$ and $D^{2}h(z) = \mathds{I}_{d}$. 
Applying It\^o's formula leads to 
\begin{align*}
    \rd h(X^{i}_{t} - \overline{X}^{i}_{t}) = 
    & \bigg\{ -\lambda (X^{i}_{t} - \overline{X}^{i}_{t})\cdot \big[(X^{i}_{t} - \overline{X}^{i}_{t}) - \kappa(\mathfrak{m}_{\alpha}(\rho^{N}_{t}) - \mathfrak{m}_{\alpha}(\rho_{t}))\big]\\
    & \quad + \frac{\sigma^{2}}{2}\,\text{trace}\bigg[\bigg( D(X^{i}_{t} - \kappa\, \mathfrak{m}_{\alpha}(\rho^{N}_{t})) - D(\overline{X}^{i}_{t} - \kappa\,\mathfrak{m}_{\alpha}(\rho_{t})) \bigg)^{2}\bigg]\bigg\}\,\rd t\\
    & \quad \quad + \sigma (X^{i}_{t} - \overline{X}^{i}_{t})\cdot \bigg[D(X^{i}_{t} - \kappa\, \mathfrak{m}_{\alpha}(\rho^{N}_{t})) - D(\overline{X}^{i}_{t} - \kappa\,\mathfrak{m}_{\alpha}(\rho_{t}))\bigg]\,\rd B^{i}_{t}\\
    & =: \mathbf{F}_{t}\,\rd t + \mathbf{G}_{t}\, \rd B^{i}_{t}.
\end{align*}
The same arguments as in the beginning of the proof of Theorem \ref{thm: finite moments} (see in particular the proof of \eqref{differentiation} therein), allow us to write
\begin{align*}
    & \frac{\rd}{\rd t}\EE\left[|X^{i}_{t} - \overline{X}^{i}_{t}|^{2}\right]  = 2 \, \EE\left[\mathbf{F}_{t}\right]\\
    & \quad = -2\,\lambda\, \EE\left[|X^{i}_{t} - \overline{X}^{i}_{t}|^{2}\right] + 2\,\lambda\,\kappa\, \EE\left[(X^{i}_{t} - \overline{X}^{i}_{t})\cdot \big(\mathfrak{m}_{\alpha}(\rho^{N}_{t}) - \mathfrak{m}_{\alpha}(\rho_{t})\big)\right]\\
    & \quad \quad \quad \quad + \sigma^{2}\,\EE\left[\text{trace}\bigg(\big( D(X^{i}_{t} - \kappa\, \mathfrak{m}_{\alpha}(\rho^{N}_{t})) - D(\overline{X}^{i}_{t} - \kappa\,\mathfrak{m}_{\alpha}(\rho_{t})) \big)^{2}\bigg)\right]. 
\end{align*}
We can now estimate each of the terms above. We have
\begin{align*}
    & \EE\left[(X^{i}_{t} - \overline{X}^{i}_{t})\cdot \big(\mathfrak{m}_{\alpha}(\rho^{N}_{t}) - \mathfrak{m}_{\alpha}(\rho_{t})\big)\right] \leq \frac{1}{2}\EE\left[|X^{i}_{t} - \overline{X}^{i}_{t}|^{2}\right] + \frac{1}{2}\EE\left[|\mathfrak{m}_{\alpha}(\rho^{N}_{t}) - \mathfrak{m}_{\alpha}(\rho_{t})|^{2}\right]
\end{align*}
and
\begin{align*}
    & \EE\left[\text{trace}\bigg(\big( D(X^{i}_{t} - \kappa\, \mathfrak{m}_{\alpha}(\rho^{N}_{t})) - D(\overline{X}^{i}_{t} - \kappa\,\mathfrak{m}_{\alpha}(\rho_{t})) \big)^{2}\bigg)\right] \\
    & \quad =  \sum\limits_{k=1}^{d} \EE\left[
    \bigg( |\{X^{i}_{t} - \kappa\,\mathfrak{m}_{\alpha}(\rho^{N}_{t})\}_{k}| - |\{\overline{X}^{i}_{t} - \kappa\,\mathfrak{m}_{\alpha}(\rho_{t})\}_{k}| \bigg)^{2}
    \right]\\
    & \quad \leq \sum\limits_{k=1}^{d} \EE\left[
    \left|\big\{X^{i}_{t}-\overline{X}^{i}_{t}\big\}_{k} - \kappa\,\{\mathfrak{m}_{\alpha}(\rho^{N}_{t}) - \mathfrak{m}_{\alpha}(\rho_{t})\big\}_{k}\right| ^{2}
    \right]\\
    & \quad \leq 2 \sum\limits_{k=1}^{d} \EE\left[
    \left|\big\{X^{i}_{t}-\overline{X}^{i}_{t}\big\}_{k}\right|^{2} + \kappa^{2}\,\left|\{\mathfrak{m}_{\alpha}(\rho^{N}_{t}) - \mathfrak{m}_{\alpha}(\rho_{t})\big\}_{k}\right| ^{2}
    \right]\\
    & \quad \leq 2\, \EE\left[|X^{i}_{t} - \overline{X}^{i}_{t}|^{2}\right] + 2 \,\kappa^{2}\,\EE\left[|\mathfrak{m}_{\alpha}(\rho^{N}_{t}) - \mathfrak{m}_{\alpha}(\rho_{t})|^{2}\right] .
\end{align*}
Therefore we have
\begin{equation}\label{eq: estimate final proof}
\begin{aligned}
    \frac{\rd}{\rd t}\EE\left[|X^{i}_{t} - \overline{X}^{i}_{t}|^{2}\right]  & \leq (-2\,\lambda + \lambda\,\kappa\, +2\,\sigma^{2})\, \EE\left[|X^{i}_{t} - \overline{X}^{i}_{t}|^{2}\right] \\
    &\quad \quad \quad  + (\lambda\,\kappa\, + 2\,\sigma^{2}\,\kappa^{2})\EE\left[|\mathfrak{m}_{\alpha}(\rho^{N}_{t}) - \mathfrak{m}_{\alpha}(\rho_{t})|^{2}\right].
\end{aligned}
\end{equation}
We are left with the last term which we estimate as follows. We have
\begin{align*}
    \EE\left[|\mathfrak{m}_{\alpha}(\rho^{N}_{t}) - \mathfrak{m}_{\alpha}(\rho_{t})|^{2}\right] \leq 2\,  \EE\left[|\mathfrak{m}_{\alpha}(\rho^{N}_{t}) - \mathfrak{m}_{\alpha}(\overline{\rho}^{N}_{t})|^{2}\right] + 2\,\EE\left[|\mathfrak{m}_{\alpha}(\overline{\rho}^{N}_{t}) - \mathfrak{m}_{\alpha}(\rho_{t})|^{2}\right] 
\end{align*}
where $\overline{\rho}^{N}_{t}$ is the empirical measure corresponding to $\{\overline{X}^{i}_{t}\, : \, 1\leq i \leq N\}$. Using Lemma \ref{lembarrhoN} yields 
\begin{align*}
    \EE\left[|\mathfrak{m}_{\alpha}(\overline{\rho}^{N}_{t}) - \mathfrak{m}_{\alpha}(\rho_{t})|^{2}\right] \leq C_{3}\frac{1}{N}
\end{align*}
where $C_{3}$ is a positive constant depending on the data and independent of $t$ and of $N$. To estimate the term $\EE\left[|\mathfrak{m}_{\alpha}(\rho^{N}_{t}) - \mathfrak{m}_{\alpha}(\overline{\rho}^{N}_{t})|^{2}\right]$ and bypass its non-Lipschitz growth, we use Lemma \ref{lemprob}. Thus we introduce for each $t\in[0,\infty)$ the event 
\begin{equation}
    A_{N,t}:=\left\{\omega\in\Omega:~\frac{1}{N}\sum_{j=1}^N|\OX_t^j (\omega)|^2\geq S\right\}\,.
\end{equation}
In the present setting, using the notation ($Z_{j}$ and $S$) of Lemma \ref{lemprob},  we set  $Z_j^t=|\OX_t^j|^2$, and one can verify from the estimate \eqref{eq(2)} that 
\begin{align*}
    & \sup_{t\geq 0}\,\EE[\,|Z_1^t|^4\,]= \sup_{t\geq 0}\,\EE[\,|\OX_t^1|^8\,]\,<C_2 ,\\
    \text{and } \quad & \sup_{t\geq 0}\,\EE[\,|Z_1^t|\,]= \,\sup_{t\geq 0}\,\EE[\,|\OX_t^1|^2\,] \,<C_2=:S. 
\end{align*}
Note that here we require the initial condition $\rho_0\in\mathscr{P}_{16}(\RR^d)$.
Therefore, Lemma \ref{lemprob}  leads to the following bound
\begin{equation}
    \mathbb{P}(A_{N,t})\leq \frac{C_4}{N^2}\quad \forall~t\geq 0 \,,
\end{equation}
where $C_4>0$ depends only on $C_2$.
Then one splits the term
\begin{align*}
    & \EE[\,|\mm_\alpha(\rho_t^{N})-\mm_\alpha(\bar \rho_t^N)|^2\,] =\EE[\,|\mm_\alpha(\rho_t^{N})-\mm_\alpha(\bar \rho_t^N)|^2\,\textbf{I}_{\Omega /A_{N,t}}\,]+\EE[\,|\mm_\alpha(\rho_t^{N})-\mm_\alpha(\bar \rho_t^N)|^2\,\textbf{I}_{A_{N,t}}\,] \\
    &\quad \quad \leq \EE[\,|\mm_\alpha(\rho_t^{N})-\mm_\alpha(\bar \rho_t^N)|^2\,\textbf{I}_{\Omega /A_{N,t}}\,]\,+\,(\EE[\,|\mm_\alpha(\rho_t^{N})-\mm_\alpha(\bar \rho_t^N)|^4\,])^{1/2}(P(A_{N,t}))^{1/2} \\
    &\quad \quad \leq \EE[\,|\mm_\alpha(\rho_t^{N})-\mm_\alpha(\bar \rho_t^N)|^2\,\textbf{I}_{\Omega /A_{N,t}}\,]+4(C_2C_4)^{1/2}\frac{1}{N}\,,
\end{align*}
where in the last inequality we used
\begin{align*}
    \EE[\,|\mm_\alpha(\rho_t^{N})-\mm_\alpha(\bar \rho_t^N)|^4\,]\,\leq\, 8\,\EE[\,|\mm_\alpha(\rho_t^{N})|^4\,]\,+\,8\,\EE[\,|\mm_\alpha(\bar \rho_t^{N})|^4\,]\,<16\,C_2\,.
\end{align*}
Meanwhile, the improved stability \eqref{lemeq1} leads to
\begin{align*}
    \EE[\,|\mm_\alpha(\rho_t^{N})-\mm_\alpha(\bar \rho_t^N)|^2\,\textbf{I}_{\Omega /A_{N,t}}\,]
    & \leq L\,\EE[\,W_2(\rho_t^N,\bar\rho_t^N)^2\,]\\
    & \leq L\,\EE\left[\,\frac{1}{N}\sum_{i\in[N]}|X_t^i-\OX_t^i|^2\,\right]\,=\,L\,\EE[\,|X_t^i-\OX_t^i|^2\,]\,,
\end{align*}
where $L$ depends only on $C_2,L_f,\alpha$.
Thus we have
\begin{equation*}
\EE[\,|\mm_\alpha(\rho_t^{N})-\mm_\alpha(\bar \rho_t^N)|^2\,]\,\leq\,  L\,\EE[\,|X_t^i-\OX_t^i|^2\,]\,+\,4(C_2C_4)^{1/2}\frac{1}{N}\,.
\end{equation*}
Finally, using the latter estimates in \eqref{eq: estimate final proof} yields
\begin{equation*}
\begin{aligned}
    \frac{\rd}{\rd t}\EE\left[|X^{i}_{t} - \overline{X}^{i}_{t}|^{2}\right]  & \leq (-2\,\lambda + \lambda\,\kappa\, +2\,\sigma^{2})\, \EE\left[|X^{i}_{t} - \overline{X}^{i}_{t}|^{2}\right] \\
    &\quad \quad \quad  + (\lambda\,\kappa\, + 2\,\sigma^{2}\,\kappa^{2})\left(L\,\EE[|X_t^i-\OX_t^i|^2] + 4(C_2C_4)^{1/2}\frac{1}{N}\right)\\
    & \leq \bigg(-2\,\lambda +2\,\sigma^{2} + \kappa\,(\lambda + L\,\lambda\, + 2\,\sigma^{2}\,\kappa\,L) \bigg)\, \EE\left[|X^{i}_{t} - \overline{X}^{i}_{t}|^{2}\right]\\
    & \quad \quad \quad \quad \quad +\frac{4(\lambda\,\kappa\, + 2\,\sigma^{2}\,\kappa^{2})}{N}(C_2C_4)^{1/2}\\
    & =:\, -\vartheta\, \EE[|X_t^i-\OX_t^i|^2] + \frac{\Theta}{N}
\end{aligned}
\end{equation*}
where 
\begin{align*}
    \vartheta := 2\,\lambda -2\,\sigma^{2} - \kappa\,(\lambda + L\,\lambda\, + 2\,\sigma^{2}\,\kappa\,L), \quad \text{ and } \quad \Theta:= 4(\lambda\,\kappa\, + 2\,\sigma^{2}\,\kappa^{2})(C_2C_4)^{1/2} >0.
\end{align*}
Choosing $\kappa\in (0,1)$ small, and $\lambda>0$ large, guarantees $\vartheta>0$. For example, one could choose
\begin{equation}\label{condition lambda kappa}
    \lambda > 3\,\sigma^{2} \quad \text{ and } \quad  \kappa<\frac{1}{2(1+L)}.
\end{equation}
We can therefore apply Gr\"onwall's Lemma and obtain the desired estimate
\begin{equation*}
    \EE\left[|X^{i}_{t} - \overline{X}^{i}_{t}|^{2}\right] \leq  \EE\left[|X^{i}_{0} - \overline{X}^{i}_{0}|^{2}\right]\,e^{-\vartheta\, t} + \frac{\Theta}{N}.
\end{equation*}
The initial condition being the same, one gets the desired conclusion. 

Proof of \eqref{condition lambda kappa}: Observe first that $\vartheta>0$ is equivalent to  
\begin{equation}\label{equivalent condition}
\lambda + \lambda\big(1 - \kappa(1+L)\big) > 2\sigma^{2} + 2\sigma^{2}\kappa^{2}L.
\end{equation}
Hence, choosing $\kappa<\frac{1}{2(1+L)}$ is equivalent to having
\(
\lambda(1-\kappa(1+L)) > \frac{\lambda}{2}.
\)
Additionally,  choosing $\kappa<\frac{1}{2(1+L)}$ guarantees that 
\[
\kappa^{2}L< \frac{L}{4(1+L)^{2}} < \frac{2(1+L)}{4(1+L)^{2}} = \frac{1}{2(1+L)},
\]
which then means $2\sigma^{2}\kappa^{2}L < \frac{\sigma^{2}}{1+L}$. On the other hand, choosing $ \lambda > 3\sigma^{2}$ implies $\lambda > 2\sigma^{2} > \frac{2\sigma^{2}}{1+L}$ because $L>0$. Therefore we have $\frac{\sigma^{2}}{1+L} < \frac{\lambda}{2}$. In conclusion, we have 
\[
2\sigma^{2}\kappa^{2}L < \frac{\sigma^{2}}{1+L} < \frac{\lambda}{2} < \lambda(1-\kappa(1+L)).
\]
It suffices now to add $\lambda$ in the right hand-side, satisfying $\lambda> 3\sigma^{2}$, then obtain \eqref{equivalent condition}.
\end{proof}

We conclude our manuscript by mentioning how to modify the standard CBO algorithm in order to take into account the rescaled consensus point with the parameter $\kappa$. This is summarized in Algorithm \ref{alg: cbo k}. The rigorous proof of convergence in the large particle limit is obtained in \cite{huang2025faithful}.

\RestyleAlgo{ruled}
\begin{algorithm}
\caption{The (rescaled) CBO algorithm}\label{alg: cbo k}
\KwData{$N,\, \lambda, \, \sigma, \, \alpha, \, \kappa,\, \delta, \, f$ [parameters], $X^{i}_{0} \sim \rho_{0}$ [initial condition], $T$ [time horizon]} 

\hfill

\textbf{Initialization:} $t\gets 0$ and $X^{i}_{t}\gets X^{i}_{0}$; 

\hfill

\While{$t\leq T$}{
 Run standard CBO algorithm \Comment*[r]{Keeping $\kappa$ in the dynamics}
 $t\gets t+\Delta t$\;
}

\KwResult{$x^{*} \gets  \frac{\kappa^{-1}}{N}\sum\limits_{i=1}^{N}X^{i}_{T}$ \Comment*[r]{The expectation rescaled back with $\kappa^{-1}$}}
\end{algorithm}

\subsection*{Acknowledgments}
The authors wish to thank the two referees for their valuable comments which helped improve the paper.

\bibliographystyle{plain} 
\bibliography{bibliography}

\end{document}